\newcommand{\metric}[2]{\ensuremath{\langle #1, #2\rangle}}  
\newcommand{\nks}{\ensuremath{S^3\times S^3}}   
\renewcommand{\epsilon}{\varepsilon}            
\newcommand{\e}{\epsilon}                       
\renewcommand{\aa}{\ensuremath{\metric{\alpha}{\alpha}}}  
\newcommand{\ab}{\ensuremath{\metric{\alpha}{\beta}}}     
\newcommand{\bb}{\ensuremath{\metric{\beta}{\beta}}}      
\newcommand{\axb}{\ensuremath{\alpha\times\beta}}         
\renewcommand{\d}{\ensuremath{\partial}}
\newcommand{\db}{\ensuremath{\bar{\partial}}}
\newcommand{\dph}{\ensuremath{\d\phi}}
\newcommand{\Z}{\ensuremath{\mathbb{Z}}}
\newcommand{\R}{\ensuremath{\mathbb{R}}}
\newcommand{\C}{\ensuremath{\mathbb{C}}}
\renewcommand{\H}{\ensuremath{\mathbb{H}}}
\newcommand{\F}{\ensuremath{\mathcal{F}}}
\newcommand{\fb}{\ensuremath{\bar{f}}}
\newcommand{\fe}{\ensuremath{f^\epsilon}}
\newcommand{\x}{\ensuremath{(\mu - 2\e i\dph)}}
\newcommand{\xx}{\ensuremath{\mu - 2\e i\dph}}
\renewcommand{\ae}{\ensuremath{\alpha^\e}}
\newcommand{\be}{\ensuremath{\beta^\e}}
\DeclareMathOperator{\im}{Im}    
\DeclareMathOperator{\csch}{csch}    
\DeclareMathOperator{\sech}{sech}    
\DeclareMathOperator{\arccot}{arccot}    
\newtheorem{theorem}{Theorem}[section]   
\newtheorem*{theorem*}{Theorem}          
\newtheorem{lemma}[theorem]{Lemma}
\newtheorem{proposition}[theorem]{Proposition}
\newtheorem{corollary}[theorem]{Corollary}
\theoremstyle{definition}
\newtheorem{remark}[theorem]{Remark}
\newtheorem{remarks}[theorem]{Remarks}
\title{Sequences of harmonic maps in the 3-sphere}
\author{Bart~Dioos \and Joeri Van der Veken \and Luc~Vrancken}
\address{Bart Dioos, Joeri Van der Veken, Luc Vrancken,
 KU\ Leuven, Departement Wiskunde, 
 Celestijnenlaan 200B,
 3001 Leuven, Belgium}
\email{bart.dioos@wis.kuleuven.be}
\email{joeri.vanderveken@wis.kuleuven.be}
\address{Luc Vrancken, LAMAV, 
 Universit\'e de Valenciennes, 
 Campus du Mont Houy, 59313 
 Valenciennes Cedex 9, France}
\email{luc.vrancken@univ-valenciennes.fr}
\subjclass[2010]{Primary 58E20; Secondary 53C42}   
\keywords{3-sphere, almost complex surface, harmonic map, pseudo-holomorphic curve, quaternions}
\thanks{The first author was partially supported by the Belgian Interuniversity Attraction Pole P07/18 (Dygest).}
\begin{document}

\begin{abstract}
We define two transforms between non-conformal harmonic maps from a surface into the 3-sphere. 
With these transforms one can construct, from one such harmonic map, a sequence of harmonic maps. 
We show that there is a correspondence between non-conformal harmonic maps into the 3-sphere, $H$-surfaces in Euclidean 3-space
and  almost complex surfaces in the nearly K\"ahler manifold~$\nks$.
As a consequence we can construct sequences of $H$-surfaces and almost complex surfaces.
\end{abstract}

\maketitle

\section{Introduction}
\label{sec:intro}

Consider a map~$f\colon S \to S^3$ from a Riemann surface~$S$ into the unit~$3$-sphere. 
The map~$f$ is harmonic if it satisfies~the equation~$\Delta f + |df|^2 f=0$ where $\Delta$
is the Laplacian on the surface~$S$ (\cite{helein}). 
The map~$f$ is conformal if it preserves the conformal structure on~$S$, that is,~$\metric{\d f}{\d f}=0$
where~$\d$ stands for~$\tfrac{\d}{\d z}$.
In this case,~$f$ is a minimal immersion of the surface in the $3$-sphere. 
For a recent and broad survey on harmonic maps, the reader is referred to~\cite{heleinwood}.
In this article we will almost always assume that~$f$ is not conformal: $\metric{\d f}{\d f}\neq 0$.
We will show that to a non-conformal harmonic map from~$S$ to~$S^3$ one can associate two new
maps from~$S$ to~$S^3$ which are also non-conformal and harmonic. In fact one can define a sequence
$\{f^p\mid p \in \Z\}$ of non-conformal harmonic maps from~$S$ into~$S^3$ where~$f^0= f$.
These will be the main results of Section~\ref{sec:transforms}.

The transforms of harmonic maps were inspired by the work~\cite{boltonvrancken}.
In that article Bolton and the last author described transforms to minimal
surfaces in~$S^5$ with non-circular ellipse of curvature. 
Antic and the last author~\cite{anticvrancken} generalized these transforms for superconformal minimal surfaces in odd-dimensional
spheres~$S^{2n+1}$ whose $(n-2)$ higher-order ellipses of curvature are circles. 
These transforms are natural generalisations of the polar construction for superconformal minimal surfaces
in odd-dimensional spheres (see~\cite{bpw} and for surfaces in~$S^3$ see~\cite{lawson}).

In Section~2 we first give some preliminaries on quaternions.
Quaternions turn out to be very convenient for describing surfaces in the 3-sphere.
A study of surfaces in spheres using a quaternionic language can be found in~\cite{burstall}.
In this reference however conformal maps are studied whereas in this article we will mostly consider non-conformal maps.
Next we will define an adapted frame for non-conformal harmonic maps and
derive the moving frame equations and the compatibilty conditions for this frame. 

In Section~\ref{sec:harmsequences} we will address the following problem.
Given a non-conformal harmonic map~$f$ from a surface into the 3-sphere we can transform it into a new non-conformal harmonic map.
When are a non-conformal map~$f$ and its transformed map equal up to an isometry of~$S^3$? 
The maps satisfying this property are certain parametrizations of Clifford tori.
Since the transforms are generalizations of Lawson's polar construction for minimal surfaces, this question is analogue to the question
when a minimal surface is congruent to its polar surface. 
The only minimal surface~in~$S^3$ congruent to its polar is the Clifford torus~$S^1(\tfrac{1}{\sqrt{2}})\times S^1(\tfrac{1}{\sqrt{2}})$.
In order to answer our question we will prove an existence and uniqueness theorem (Proposition~\ref{prop:bonnet})
that very much resembles the classical existence and uniqueness theorem of Bonnet. Using the
Bonnet-type theorem and a technical lemma, we can answer our question in Theorem~\ref{thm:equiv}.

The original motivation of the authors to investigate these transforms of harmonic maps is the 
study of almost complex surfaces in the nearly K\"ahler manifold~$\nks$.
In the paper~\cite{bddv}, Bolton, Dillen, Dioos and Vrancken found a correspondence between almost complex surfaces in~$\nks$
and $H$-surfaces~$X$ in~$\R^3$, that is, surfaces that satisfy the Wente~$H$-equation~$X_{xx}+X_{yy}=-\tfrac{4}{\sqrt{3}}X_x \times X_y$.
Moreover, on such an almost complex surface there exists a quadratic holomorphic differential. 
In Section~\ref{sec:acsurfaces} of the present paper we will discuss the relation between harmonic maps to~$S^3$,
$H$-surfaces in~$\R^3$ and almost complex surfaces in~$\nks$.
In Theorem~\ref{thm:acsurface} we prove that almost complex surfaces in~$\nks$
correspond to harmonic maps in the 3-sphere, and vice versa. 
This correspondence follows quickly from the results in~\cite{bddv} we just mentioned and a non-conformal analogue of
Lawson's correspondence Theorem (Proposition~\ref{prop:lawson}).
As a corollary one can associate a whole sequence of such surfaces to one given almost complex surface~in~$\nks$ with non-vanishing differential.

\section{Harmonic maps to~$S^3$}
\label{sec:harmmaps}

The ring of quaternions~$\H$ can be identified with the vector space~$\R^4$. 
If quaternions are written as real linear combinations of the basis elements $1$,~$e_1$,~$e_2$ and~$e_3$, then 
the quaternion multiplication is determined completely by the identities
\[
   e_1^2 = e_2^2 = e_3^2 = e_1 e_2 e_3 = -1.
\]
A quaternion that is a linear combination of~$e_1$,~$e_2$ and~$e_3$ is called an imaginary quaternion.
The set of imaginary quaternions~$\im \H$ can be identified with the Euclidean space~$\R^3$.
The product of two imaginary quaternions~$\alpha$ and~$\beta$ is given by
\begin{equation}
\label{eq:improd}
   \alpha \beta = -\metric{\alpha}{\beta} + \alpha \times \beta
\end{equation}
where~$\metric{\,}{\,}$ is the Euclidean inner product and~$\times$ is the usual vector product on~$\R^3$.

The quaternions are very useful to describe the 3-sphere and its tangent spaces.
The 3-sphere~$S^3$ is the set of unit quaternions~$\{p \in \H \mid \|p\| = 1\}$. 
One can prove that~$\metric{uv}{uw}=\metric{u}{u}\metric{v}{w}$ for all quaternions~$u$, $v$ and~$w$. 
Therefore $p\alpha$ is orthogonal to~$p$ for every imaginary quaternion~$\alpha$
and the tangent space at~$p$ is 
\begin{equation}
 \label{eq:tang}
 T_p S^3 = \{p\alpha \mid \alpha \in \im \H\}.
\end{equation}
On a surface we will use complex coordinates, so in order to describe the complexified tangent vectors
we will need the complexified quaternions~$\H\otimes \C= \H \oplus i \H$.
The element~$i$ must be distinguished from~$e_1 \in \H$. The complex bilinear extension of the
Euclidean metric and vector product will also be denoted by~$\metric{\,}{\,}$ and~$\times$.
The product of two complexified quaternions~$p_1+ip_2$ and~$q_1+iq_2$ is
\begin{equation*}
 (p_1+ip_2)(q_1+iq_2) = (p_1 q_1 - p_2 q_2) +i(p_1 q_2+p_2 q_1). 
\end{equation*}
The conjugate of a complexified quaternion~$p_1+ip_2$ is equal to~
\[ \overline{p_1+ip_2}=p_1 - ip_2 \] 
and is denoted with a bar. 
In contrast, the conjugate of a quaternion~$p$ is written as
\[
   p^* = (a + b e_1 + c e_2 + d e_3)^* = a - b e_1 - c e_2 - d e_3.
\]
So~$\,\bar{ }\,$ means conjugation with respect to the imaginary~$i$ and~${}^*$ is conjugation
with respect to the three imaginaries~$e_1$,~$e_2$,~$e_3$.

Now consider a harmonic map~$f\colon S\to S^3\subset \H$ from a Riemann surface~$S$ into the $3$-sphere~$S^3$. 
Choose a local complex coordinate~$z=x+iy$ on~$S$. 
Denote~$\tfrac{\partial}{\partial z}$ and~$\tfrac{\partial}{\partial \bar{z}}$ by~$\d$ and~$\db$ respectively. 
Similarly the derivatives of~$f$ with respect to the real coordinates~$x$ and~$y$ will be written as~$f_x$ and~$f_y$ respectively. 
We introduce the~$\H\otimes \C$-valued function
\[
    f_1 = \d f.
\]
Since~$\metric{f}{f}=1$, it follows that~$\metric{f}{f_1}=0$. 

Harmonicity means that~$\d\db f = -|\d f|^2 f$.
If~$f$ is in addition a conformal map,~$f$ is a minimal isometric immersion of~$S$ in~$S^3$. 
We assume that~$f$ is not conformal.
By the harmonicity of~$f$, the function~$\metric{f_1}{f_1}$ is holomorphic. 
Since~$f$ is assumed to be non-conformal, $\metric{f_1}{f_1}$ is non-zero. 
Therefore there exists a complex coordinate~$z$ such that~$\metric{f_1}{f_1}=-1$. 
We will call such a coordinate an \emph{adapted complex coordinate for~$f$}.

By \eqref{eq:tang} there exist functions~$\alpha$ and~$\beta$ with values in~$\im\H$ such that~$f_x = f\alpha$ and~$f_y = f\beta$.
This means that~$f_1=\tfrac{1}{2}f(\alpha-i \beta)$. 
It follows from $\metric{f_1}{f_1}=-1$ that
\begin{align*}
   &\aa - \bb = - 4,  &  &\ab = 0.
\end{align*}
Hence there is a non-negative smooth function~$\phi$ such that
\begin{align}
 \label{eq:aabb}
 |\alpha| &= 2\sinh \phi, &    |\beta| &= 2\cosh \phi.
\end{align}
Note that~$\beta$ is nowhere vanishing by equation~\eqref{eq:aabb}, but~$\alpha$ can be zero. At points were~$\alpha$
is not zero the vectors~$f_1$ and~$\fb_1$ are linearly independent and~$\phi$ is positive.
In the following we will tacitly assume that we are working on the open subset~$U$ of~$S$ where~$\alpha\neq 0$.
On this set $\phi$ is positive and the image~$f(U)$ is a surface in the~3-sphere.
At a point of~$U$ define~$N$ to be the real unit vector in the direction of~$f(\axb$). Then~$N$
is orthogonal to~$\{f,f_1,\fb_1\}$ and
\[
   |f(\axb)|^2 = \aa \bb - \ab^2 = 4\sinh^2 2\phi,
\] 
hence~$f(\axb) = \pm 2\sinh 2\phi\, N$. 
For definiteness we take~$N$ such that the orthogonal frame~$\{f, f_x, f_y, N\}$ 
in~$\R^4$ is positively oriented, so~$N=\tfrac{1}{2}\csch 2\phi\,f(\axb)$.
We have now defined a complex moving frame~$\F=\{f,f_1,\fb_1,N\}$ for~$f$ on the set~$U$.
The matrix~$A$ of complex inner products of the vectors of~$\F$ is
\begin{equation}
\label{eq:A}
A=
 \begin{pmatrix}
 1 & 0 & 0             & 0 \\
 0 & -1 & \cosh 2\phi  & 0 \\
 0 & \cosh 2\phi  & -1 & 0 \\
 0 & 0 & 0 & 1
 \end{pmatrix}.
\end{equation}

Now we give the moving frame equations for~$\F$. Write~$\mu = \metric{\d f_1}{N}$. 
An easy calculation using~\eqref{eq:A} gives the equations in terms of~$\phi$ and~$\mu$.
\begin{align}
\label{eq:mfeqs}
\begin{split}
 \d f     &= f_1,\\
 \d f_1   &= f + 2\dph (\coth 2\phi \,f_1 + \csch 2\phi\,\fb_1) + \mu\,N, \\
 \d \fb_1 &= -\cosh 2\phi\,f, \\
 \d N     &= -\mu \csch 2\phi (\csch 2\phi\,f_1 + \coth 2\phi\, \fb_1).
\end{split}
\end{align}
The corresponding~$\db$-equations can be found by simply taking the conjugates of the above ones. 
The compatibility conditions~$\d\db \F = \db \d \F$ for the frame~$\F$ give
\begin{align}
\label{eq:compcond}
\begin{split}
2 \d\db \phi &= -\sinh 2\phi + |\mu|^2 \csch 2\phi,\\
\db \mu &= -2 \bar{\mu}\dph \csch 2\phi.
\end{split}
\end{align}
The complex function~$\mu$ measures the rate at which the image of~$f$ is pulling away from the great 2-sphere tangent to
the image of~$f$. If~$f$ is a map into a great 2-sphere, then~$\mu$ vanishes
and the above compatibility condition for~$\phi$ becomes the sinh-Gordon equation. 
\begin{remarks} 
\label{rem:remarksphere}
\begin{enumerate}
\item
A non-conformal harmonic map~$f$ from a compact surface~$S$ to the 2-sphere always must have a singular point.
Indeed, suppose such a map does not have singular points, then~$\phi$ is a positive function on a compact surface
satisfying the sinh-Gordon equation. But then the maximum principle gives a contradiction.
\item
Recall that by Hopf's lemma there are no non-conformal harmonic maps from the 2-sphere to the 3-sphere.
\end{enumerate}
\end{remarks}

\section{The transforms}
\label{sec:transforms}

Let~$f\colon S\to S^3$ be a non-conformal harmonic map. 
In this section we will first show how to associate to~$f$ two new non-conformal harmonic maps~$f^+$ and~$f^-$ from~$S$
into~$S^3$. Moreover, if~$z$ is an adapted complex coordinate for the map~$f$, then
it also is an adapted coordinate for~$f^+$ and~$f^-$. Then we will show that the two transformations are each others inverse
in the sense that~$(f^+)^- = (f^-)^+ = f$. Consequently we can associate a sequence
$\{f^p\mid p \in \Z\}$ of non-conformal harmonic maps from~$S$ into~$S^3$ to the map~$f^0= f$.

Fix a point $p \in S^3$ and consider the vectors
\begin{align}
\label{eq:vectors}
  \pm\sin\theta \frac{f\beta}{|f\beta|} + \cos\theta N
\end{align}
in $T_{f(p)}S^3$, where~$\theta$ is chosen such that~$\cos\theta = |\alpha|/|\beta|=\tanh\phi$
and~$\sin \theta = \sech\phi$. The ellipse~$E$ with~$f\alpha$ and~$f\beta$ as minor and major semi-axes is the image of 
a circle in the tangent plane to~$S$ at~$p$ under~$df$.
The cosine~$\cos\theta$ is the ratio between the lengths of the minor and major axes of this ellipse and
is a measure for its eccentricity as well as for the non-conformality of~$f$.
The vectors above now have the following geometrical meaning. 
Let~$R_\theta$ be the rotation of $T_{f(p)}S^3$ about the minor axis of $E$ through the angle~$\theta$.
Then the orthogonal projection of the rotated ellipse~$R_\theta(E)$ onto the plane containing~$E$ is a circle. Of course, the same
holds for the rotation $R_{-\theta} = R_{\theta}^{-1}$.
The vectors above are the images of the unit normal~$N$ under the rotations~$R_\theta$ and $R_{-\theta}$.  

We can rewrite the vectors in \eqref{eq:vectors} as
\begin{align}
\label{eq:def}
 \begin{split}
  f^+ &= \phantom{-}\frac{i}{2}\sech^2\phi (f_1-\fb_1) + \tanh\phi\, N,\\
  f^- &= -\frac{i}{2}\sech^2\phi (f_1-\fb_1) + \tanh\phi\, N,
 \end{split}
\end{align}
where we have used~$f\beta=i(f_1-\fb_1)$ and~$|f\beta|=2 \cosh\phi$.
By varying the point~$p$, we can look at~$f^+$ and~$f^-$ as maps from~$S$ to $S^3$ again and we call them the~\emph{$(+)$transform} and~\emph{$(-)$transform} of $f$ respectively.
If~$f$ were conformal, that is if~$|\alpha|=|\beta|$ every\-where, the expressions~\eqref{eq:vectors} would still make sense. In fact, we get~$\theta=0$ and recover
the polar surface of the minimal surface $f$, see~\cite{lawson}. From now on we restrict to the non-conformal case and we denote the transforms \eqref{eq:def} by~$\fe$ where~$\e$ is $1$ or~$-1$. 
All objects and functions related to~$f^\epsilon$ will be denoted with a superscript~$\epsilon$.
For instance,~$\alpha^\e$ and~$\beta^\e$ are the functions such that~$\d \fe = \tfrac{1}{2}\fe (\alpha^\e - i\beta^\e)$
and~$\phi^\e$ is the non-negative smooth function that satisfies the~$\e$-analogue of~\eqref{eq:aabb}.

\begin{remark}
In~\cite[p.~64-65]{helein} H\'elein describes how to associate to a harmonic map
from a surface to~$S^2$ two new harmonic maps from the surface into the 3-sphere. 
These harmonic maps are conformal, so they are different from the transforms we describe here. 
\end{remark}

\begin{theorem}
 \label{thm:harmonic}
Let~$f\colon S\to S^3$ be a non-conformal harmonic map from a Riemann surface~$S$ into the 3-sphere.
Then the tranforms~$f^+$ and~$f^-$ are also non-conformal harmonic maps from~$S$ to~$S^3$.
Furthermore, an adapted complex coordinate for~$f$ is also an adapted complex coordinate for~$f^+$
and~$f^-$.
\end{theorem}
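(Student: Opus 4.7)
The plan is a direct computation in the moving frame $\F=\{f,f_1,\fb_1,N\}$ using the structure equations \eqref{eq:mfeqs} and compatibility conditions \eqref{eq:compcond} from Section~\ref{sec:harmmaps}. Writing the definition \eqref{eq:def} as $\fe = a(f_1-\fb_1) + bN$ with $a=\tfrac{\e i}{2}\sech^2\phi$ and $b=\tanh\phi$, the matrix \eqref{eq:A} immediately gives $\metric{\fe}{\fe} = -4a^2\cosh^2\phi + b^2 = \sech^2\phi + \tanh^2\phi = 1$, so $\fe$ lands in $S^3$.

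Next I would compute $\d\fe$ by differentiating term by term and substituting \eqref{eq:mfeqs}. After the hyperbolic identities $\coth 2\phi - \tanh\phi = \csch 2\phi$ and $\tanh\phi + \csch 2\phi = \coth 2\phi$ simplify the coefficients, the outcome takes the clean form
\[
  \d \fe \;=\; \e i\, f \;+\; C_1\, f_1 \;+\; C_2\, \fb_1 \;+\; C_3\, N,
\]
where each $C_j$ is an explicit scalar multiple of the quantity $\x$ and in particular $C_2 = C_1 \cosh 2\phi$. Inserting this expansion into \eqref{eq:A} to compute $\metric{\d\fe}{\d\fe}$, the $f$-term contributes $-1$ and the surviving $(\xx)^2$-terms collapse via $\cosh^2 2\phi - 1 = 4\sinh^2\phi\cosh^2\phi$ to cancel exactly, leaving $\metric{\d\fe}{\d\fe}=-1$. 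This simultaneously establishes that $\fe$ is non-conformal and that $z$ remains an adapted coordinate.

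For harmonicity I would show that $\d\db\fe$ is parallel to $\fe$. Applying $\db$ to the expression above for $\d\fe$ via the $\db$-analogues of \eqref{eq:mfeqs}, the $f$-component of $\d\db\fe$ works out to $-C_1\cosh 2\phi + C_2$, which vanishes identically because of the relation $C_2 = C_1\cosh 2\phi$. The remaining $f_1$, $\fb_1$, and $N$ components must then reduce to a common scalar multiple of $(a,-a,b)$, the triple encoding $\fe$ in the frame. This last reduction is the technical heart of the argument: the terms $\db C_j$ bring in $\d\db\phi$ and $\db\mu$, and it is exactly the sinh-Gordon-type equation together with the Codazzi-like identity in \eqref{eq:compcond} that are needed for the proportionality to hold. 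The common proportionality constant is then forced to equal $-|\d\fe|^2$ by the constraint $|\fe|=1$, yielding the harmonic map equation for $\fe$.
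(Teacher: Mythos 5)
Your proposal is correct and follows essentially the same route as the paper: compute $\d\fe$ in the frame $\F$ (the paper's equation for $\fe_1$ has exactly your coefficients, including the relation $C_2=C_1\cosh 2\phi$), verify $\metric{\fe_1}{\fe_1}=-1$ from the Gram matrix \eqref{eq:A}, and then establish $\d\db\fe\parallel\fe$ using the compatibility conditions \eqref{eq:compcond}. The only cosmetic difference is that the paper checks the proportionality by computing the four inner products $\metric{\d\db\fe}{f},\dots,\metric{\d\db\fe}{N}$ and comparing them with those of $\fe$, whereas you expand in frame coefficients; since $A$ is invertible these are equivalent, and your observation that the proportionality constant is forced to be $-|\fe_1|^2$ by $|\fe|=1$ is a valid shortcut.
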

\begin{proof}
First we define~$\fe_1= \d \fe$. 
Using the definition~\eqref{eq:def} of~$\fe$ and the moving frame equations~\eqref{eq:mfeqs} for~$\F$
one gets
\begin{equation}
\label{eq:fe1}
 \fe_1 = \e i f -\frac{1}{2}\x \sech^2\phi (\csch2\phi\,f_1 +\coth2\phi\,\fb_1 - \e i \, N).
\end{equation}
A computation using the inner products~\eqref{eq:A} then gives
\begin{align}
 &\metric{\fe_1}{\fe_1} = -1, \notag\\
 &|\fe_1|^2 = 1 + \frac{1}{2}|\xx |^2\sech^2\phi. \label{eq:normfe1}
\end{align}
So the coordinate~$z$ is an adapted complex coordinate for~$f^\epsilon$ as well.

Next we will show that~$f^\epsilon$ is a non-conformal harmonic map. 
We have just showed that~$\metric{\fe_1}{\fe_1}$ is non-zero, so~$\fe_1$ clearly is non-conformal.
We still have to check that~$\d\db\fe$ is a multiple of~$\fe$. 
An easy calculation using~\eqref{eq:A} gives
\begin{align}
\label{eq:someinprod}
\begin{split}
  &\metric{\fe}{f}=0, \\     
  &\metric{\fe}{f_1}=-\e i, \\ 
  &\metric{\fe}{\fb_1}=\e i, \\
  &\metric{\fe}{N} =\tanh \phi.
\end{split}
\end{align}
Equation~\eqref{eq:fe1} gives
\begin{align}
\label{eq:someiptwo}
\begin{split}
 \metric{\fe_1}{f}&= \e i, \\
 \metric{\fe_1}{f_1}&=-\tanh\phi \x,  \\
 \metric{\fe_1}{\fb_1}&=0, \\
 \metric{\fe_1}{N} &= \frac{\e i}{2} \sech^2\phi \x.
\end{split}
\end{align}
Now we can calculate the inner products of~$\d\db\fe$ with the frame vectors of~$\F$.
The first inner product is
\[
  \metric{\d\db\fe}{f}= \db\metric{\fe_1}{f}-\metric{\fe_1}{\fb_1} = 0.
\]
With a similar calculation using the compatibility conditions~\eqref{eq:compcond} and equation~\eqref{eq:normfe1}
the other inner products become
\begin{align*}
 \metric{\d\db\fe}{f_1} &= \e i |\fe_1|^2,\\
 \metric{\d\db\fe}{\fb_1} &= -\e i |\fe_1|^2,\\
 \metric{\d\db\fe}{N} &= -\tanh\phi |\fe_1|^2.
\end{align*}
Comparing with~\eqref{eq:someinprod} we have now shown that~$\d\db\fe=-|\fe_1|^2\fe$ and thus~$\fe$ is an harmonic map into the 3-sphere.
\end{proof}

\begin{lemma}
 \label{lem:phie}
The function~$\phi^\e$ is positive on an open dense subset~$U^\e$ of~$S$.
\end{lemma}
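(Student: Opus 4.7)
The plan is to show that $\phi^\e$ vanishes at a point of $U$ if and only if the complex-valued smooth function $\xx$ does, via equation~\eqref{eq:normfe1}, and then to rule out that $\xx$ vanishes identically on any nonempty open subset of $U$ by a short computation with the compatibility conditions~\eqref{eq:compcond}. All of this takes place on the open subset $U \subset S$ on which $\phi > 0$ and on which the frame $\F$, the function $\mu$ and the transform $\fe$ are defined.

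For the first step I would compute $|\fe_1|^2$ from $\fe_1 = \tfrac{1}{2}\fe(\ae - i\be)$ together with $|\ae| = 2\sinh\phi^\e$, $|\be| = 2\cosh\phi^\e$; exactly the calculation that produced the entry $\cosh 2\phi$ of~\eqref{eq:A} yields $|\fe_1|^2 = \cosh 2\phi^\e$. Comparing with~\eqref{eq:normfe1} gives
\[
   \cosh 2\phi^\e - 1 = \tfrac{1}{2}|\xx|^2\sech^2\phi,
\]
so at every point of $U$ we have $\phi^\e = 0$ precisely when $\xx = 0$. In particular $U^\e$ is open, and it remains to show that $\xx$ does not vanish identically on any nonempty open $V \subset U$.

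Assume for contradiction that $\mu = 2\e i\,\dph$ on some such $V$. Applying $\db$ and using the first condition of~\eqref{eq:compcond} together with $|\mu|^2 = 4|\dph|^2$,
\[
   \db\mu = 2\e i\,\db\dph = \e i\bigl(-\sinh 2\phi + 4|\dph|^2\csch 2\phi\bigr);
\]
on the other hand, using $\bar{\mu} = -2\e i\,\db\phi$ together with the second condition of~\eqref{eq:compcond},
\[
   \db\mu = -2\bar{\mu}\,\dph\,\csch 2\phi = 4\e i\,|\dph|^2\,\csch 2\phi.
\]
Equating these two expressions and cancelling the common term forces $\sinh 2\phi = 0$ on $V$, contradicting $\phi > 0$ on $U$. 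Hence the zero set of $\xx$ in $U$ has empty interior, so $U^\e$ is both open and dense.

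The only real step is this algebraic manipulation; once the equivalence $\phi^\e = 0 \Leftrightarrow \xx = 0$ is identified, the two compatibility conditions of~\eqref{eq:compcond} give conflicting expressions for $\db\mu$ as soon as one assumes $\mu = 2\e i\,\dph$, and the contradiction follows immediately.
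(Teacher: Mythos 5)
Your proposal is correct and follows essentially the same route as the paper: both establish $4\sinh^2\phi^\e=|\xx|^2\sech^2\phi$ from \eqref{eq:normfe1} (so $\phi^\e=0$ exactly where $\xx=0$) and then derive $\sinh 2\phi=0$ from the compatibility conditions~\eqref{eq:compcond} on any open set where $\mu=2\e i\,\dph$. Your ``two conflicting expressions for $\db\mu$'' is just the paper's computation of $\db\x$ specialized to $\x=0$, so the arguments coincide.
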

\begin{proof}
From the expressions for~$\metric{\fe_1}{\fe_1}$ and~$|\fe_1|^2$
it follows that~$\metric{\alpha^\e}{\alpha^\e}=|\mu-2\e i \dph|^2 \sech^2\phi$.
So~$\phi^\e$ is non-negative and is zero if and only if~$\mu=2\e i \dph$.
The compatibility conditions~\eqref{eq:compcond} for the moving frame~$\F$ give
\[
 \db \x = \e i \bigl(\sinh 2\phi - \bar{\mu}\csch 2\phi \x\bigr).
\] 
If $\mu=2\e i \dph$ on an open set, the equation gives~$\sinh 2\phi=0$, which means~$\phi=0$.
This gives a contradiction.
\end{proof}

By Lemma~\ref{lem:phie} the normal~$N^\e$ and the moving frame~$\F^\e = \{\fe, \fe_1, \fb^\e_1, N^\e\}$ 
of the new harmonic map~$\fe$ are well-defined on an open dense set~$U^\e$. 
The $\e$-analogues of the moving frame equations and compatibility conditions for~$\F^\e$ hold on~$U^\e$.

Next we prove that the~$(+)$transform and $(-)$transform are mutual inverses. Before doing so, we 
calculate the functions~$\alpha^\e$, $\beta^\e$ and~$\alpha^\e\times \beta^\e$ 
in terms of~$\alpha$, $\beta$, $\phi$ and~$\mu$.
\begin{lemma}
 \label{lem:calc}
If $\mu=\mu_1 + i \mu_2$ and~$\phi_x= \tfrac{\dph}{\d x}$ and~$\phi_x= \tfrac{\dph}{\d y}$,
then
\begin{align*}
\alpha^\e &= (\mu_2-\e \phi_x)\csch 2\phi\, \alpha  \\
            &\quad +\frac{1}{2}(\mu_1-\e \phi_y)\tanh\phi\sech^2\phi\bigl(\beta - \frac{\e}{2}\csch^2\phi\,\axb \bigr),\\
\beta^\e &=  (\mu_1-\e \phi_y)\csch 2\phi\, \alpha 
            +\sech^2\phi \bigl( 1-\tfrac{1}{2}(\mu_2-\e\phi_x)\tanh\phi\bigr)\beta\\ 
                       & \quad  +\frac{\e}{2}\sech^2\phi\bigl(1+(\mu_2-\e\phi_x)\csch 2\phi\bigr)\axb ,\\
\alpha^\e \times\beta^\e &= 2\e\csch 2\phi(\mu_1-\e \phi_y)\,\alpha
       -\frac{\e}{2}\sech^4\phi \bigl(|\mu|^2+(\mu_2-\e\phi_x)\sinh2\phi\bigr)\beta \\
       &\quad -\frac{1}{4}\sech^4\phi \bigl(|\mu|^2+2(\mu_2-\e\phi_x)\coth\phi\bigr)\axb.           
\end{align*}
\end{lemma}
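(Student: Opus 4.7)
The key identity is
\begin{equation*}
\ae - i\be = 2(\fe)^*\fe_1,
\end{equation*}
which follows directly from $\fe_x = \fe\ae$, $\fe_y = \fe\be$ together with $(\fe)^* = (\fe)^{-1}$ (valid since $\fe \in S^3$). My plan is to make the right-hand side explicit as a complex linear combination of $\alpha$, $\beta$ and $\axb$, read off $\ae$ and $\be$ as its real and (negative) imaginary parts, and then compute $\aexbe$ by bilinear expansion.

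To make $(\fe)^*$ tractable I first rewrite $\fe$ as $f$ multiplied on the right by an imaginary quaternion. Using $f_1 - \fb_1 = -if\beta$ (from $f_1 = \tfrac{1}{2}f(\alpha - i\beta)$) and $N = \tfrac{1}{2}\csch 2\phi\, f(\axb)$, equation \eqref{eq:def} becomes
\begin{equation*}
\fe = \tfrac{1}{4}\sech^2\phi\, f\,(2\e\beta + \axb),
\end{equation*}
so $(\fe)^* = -\tfrac{1}{4}\sech^2\phi\,(2\e\beta + \axb)\,f^*$, using $\eta^* = -\eta$ for imaginary $\eta$ and $(pq)^* = q^*p^*$. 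Independently I apply $f^*$ to \eqref{eq:fe1} term by term, using $f^*f = 1$, $f^*f_1 = \tfrac{1}{2}(\alpha - i\beta)$, $f^*\fb_1 = \tfrac{1}{2}(\alpha + i\beta)$ and $f^*N = \tfrac{1}{2}\csch 2\phi\,(\axb)$; the identities $\csch 2\phi + \coth 2\phi = \coth\phi$ and $\coth 2\phi - \csch 2\phi = \tanh\phi$ then collapse the bracket in \eqref{eq:fe1} into a short expression in $\alpha$, $\beta$ and $\axb$.

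Combining, $(\fe)^*\fe_1 = -\tfrac{1}{4}\sech^2\phi\,(2\e\beta + \axb)(f^*\fe_1)$ reduces to computing the three quaternionic products $(2\e\beta + \axb)\eta$ for $\eta \in \{\alpha, \beta, \axb\}$. Each is evaluated by \eqref{eq:improd} combined with the orthogonality of $\alpha, \beta, \axb$, the norms $|\alpha|^2 = 4\sinh^2\phi$, $|\beta|^2 = 4\cosh^2\phi$, $|\axb|^2 = 16\sinh^2\phi\cosh^2\phi$, and the BAC-CAB identity whenever $\axb$ is involved. Splitting the resulting complex imaginary quaternion into real and imaginary parts with respect to $i$ yields the stated formulas for $\ae$ and $\be$.

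Finally, for $\aexbe$ I substitute the expressions just obtained and expand bilinearly using
\begin{equation*}
\alpha\times\beta = \axb,\qquad \alpha\times(\axb) = -|\alpha|^2\beta,\qquad \beta\times(\axb) = |\beta|^2\alpha,
\end{equation*}
then collect terms along $\alpha$, $\beta$, $\axb$. The one genuine obstacle throughout is bookkeeping: many apparently distinct $\e$-dependent terms must be merged using identities such as $\tanh\phi\,\csch^2\phi = 2\csch 2\phi$ and $1 + \csch^2\phi = \coth^2\phi$, together with $\e^2 = 1$, to produce the compact combinations involving $|\mu|^2$ displayed in the lemma.
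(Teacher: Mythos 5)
Your proposal is correct and follows essentially the same route as the paper: both start from the identity $\ae - i\be = 2(\fe)^*\fe_1$, expand the quaternionic product using \eqref{eq:improd} and the known inner products, split into real and imaginary parts with respect to~$i$, and obtain $\aexbe$ from the identity $u\times(v\times w)=\metric{u}{w}v-\metric{u}{v}w$. The only (cosmetic) difference is that you first cancel $f^*f=1$ so as to work entirely with the imaginary quaternions $\alpha$, $\beta$, $\axb$, whereas the paper tabulates the products $N^*f$, $N^*f_1$, $(f_1-\fb_1)^*f_1$, etc.\ directly; both computations lead to the same intermediate expression for $\ae-i\be$.
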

\begin{proof}
The~$\e$-transform~$\fe$ is a map to~$S^3$, so~$\fe(\fe)^* = 1$. Hence~$\ae-i\be= 2(\fe)^* \fe_1$.
The definition of the transform~$\fe$ and expression~\eqref{eq:fe1} for~$\fe_1$ give 
\begin{align*}
\ae-i\be &= \bigl(\e i\sech^2\phi (f_1-\fb_1)^* + 2\tanh\phi\, N^* \bigr) \cdot \\
        &\qquad \Bigl(\e i f -\frac{1}{2}\x \sech^2\phi (\csch 2\phi\,f_1 +\coth 2\phi\,\fb_1 - \e i \, N\Bigr)
\end{align*}
The equality~$(p\alpha)^* = -\alpha p^*$ holds~for every quaternion~$p$ and every imaginary quaternion~$\alpha$.
This equality and~\eqref{eq:aabb} give
\begin{align*}
 N^*f&= -\frac{1}{2}\csch 2\phi\,\axb,                    &(f_1-\fb_1)^* f&= i \beta, \\
 N^*f_1&= -\frac{1}{2}\bigl(\tanh\phi\,\beta+i\coth\phi\,\alpha\bigr),   &(f_1-\fb_1)^* f_1 &= -2\cosh^2\phi - \frac{i}{2}\axb,\\
 N^*\fb_1&= -\frac{1}{2}\bigl(\tanh\phi\,\beta-i\coth\phi\,\alpha\bigr), &(f_1-\fb_1)^* N &= i \coth\phi\, \alpha.
\end{align*}
Substitution of these equations give 
\begin{align*}
\ae - i \be &= -i\sech^2\phi \Bigl(\beta + \frac{\e}{2}\axb\Bigr)
                     -\x\bigl(i\csch 2\phi\,\alpha \bigr.\\
                   & \quad \bigl. -\tfrac{1}{2}\tanh\phi\sech^2\phi\,\beta 
                                      +\tfrac{\e}{4}\coth\phi\sech^4\phi\,\axb  \bigr).
\end{align*}
After expanding the product in the last term and simplifying some hyperbolic trigonometric identities,
one obtains the expressions for~$\alpha^\e$ and~$\beta^\e$. The expression for~$\alpha^\e\times\beta^\e$
can be calculated by using the formula~$u\times(v\times w) = \metric{u}{w}v-\metric{u}{v}w$ and the inproducts~\eqref{eq:aabb}.
\end{proof}

\begin{theorem}
\label{thm:inverses}
Let~$f\colon S\to S^3$ be a non-conformal harmonic map. Then the $(+)$transform and $(-)$transform of~$f$ 
are mutual inverses in the sense that
\[ (f^+)^- = (f^-)^+ = f.\]
\end{theorem}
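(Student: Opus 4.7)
The plan is to verify the single identity $(\fe)^{-\e} = f$ for $\e = \pm 1$ by direct computation in the moving frame $\F=\{f,f_1,\fb_1,N\}$ of the original map; this covers both $(f^+)^- = f$ and $(f^-)^+ = f$ at once. Applying the definition~\eqref{eq:def} with $\fe$ in place of $f$ gives
\[
 (\fe)^{-\e} = -\e\,\tfrac{i}{2}\sech^2\phi^\e\,(\fe_1 - \fb^\e_1) + \tanh \phi^\e\, N^\e,
\]
so the task is to expand both summands in $\F$, collect the coefficients of $f,f_1,\fb_1,N$, and check that they are $1,0,0,0$ respectively.

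All the ingredients needed are already in place. For the first summand I would use~\eqref{eq:fe1}: bar-conjugation sends $\xx$ to $\bar\mu+2\e i\db\phi$, fixes $f$ and $N$ and swaps $f_1 \leftrightarrow \fb_1$, so the subtraction $\fe_1-\fb^\e_1$ annihilates the $f$-contribution and leaves an explicit combination of $f_1,\fb_1,N$ with coefficients in $\phi$ and $\mu$. For the second summand I would begin from $N^\e = \tfrac{1}{2}\csch(2\phi^\e)\,\fe(\aexbe)$, insert the expression for $\aexbe$ from Lemma~\ref{lem:calc}, rewrite $\fe = \tfrac{\e}{2}\sech^2\phi\,f\beta+\tanh\phi\,N$, and evaluate the resulting quaternion products using $\alpha\beta = -\ab + \axb$ and $(p\gamma)^* = -\gamma p^*$, together with the identifications $f\alpha=f_1+\fb_1$, $f\beta=i(f_1-\fb_1)$ and $f(\axb)=2\sinh 2\phi\,N$. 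The $\phi^\e$-factors I would eliminate via $|\fe_1|^2 = \cosh 2\phi^\e$, which combined with~\eqref{eq:normfe1} gives
\[
 \cosh 2\phi^\e = 1 + \tfrac{1}{2}|\xx|^2\sech^2\phi,
\]
from which $\sinh 2\phi^\e$, $\sech\phi^\e$ and $\tanh\phi^\e$ all follow by elementary hyperbolic identities.

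The main obstacle is book-keeping rather than any conceptual point: the nested hyperbolic functions in $\phi$ and $\phi^\e$ must be reduced through the above relation at many places, and the $\e$-signs must be tracked carefully throughout. A small streamlining would be to observe that both $f$ and $(\fe)^{-\e}$ are unit vectors lying in the $2$-plane of $\R^4$ spanned by $\fe\be^\e$ and $N^\e$: for $(\fe)^{-\e}$ this is by construction, and for $f$ it follows from $\metric{f}{\fe} = 0$ in~\eqref{eq:someinprod} together with
\[
 \metric{f}{\fe\ae^\e} = \metric{f}{\fe_1 + \fb^\e_1} = 2\re\metric{f}{\fe_1} = 0,
\]
using $\metric{\fe_1}{f} = \e i$ from~\eqref{eq:someiptwo}. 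Equality then reduces to matching a single inner product, for example the $N^\e$-component, but the trigonometric simplification described above is still needed to finish.
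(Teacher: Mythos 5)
Your overall strategy --- expand $(\fe)^{\tilde\e}$ with $\tilde\e=-\e$ in the frame $\F=\{f,f_1,\fb_1,N\}$ and check that the coefficients are $(1,0,0,0)$ --- is viable and is essentially the paper's computation written in a different basis: the paper instead uses $\fe=\tfrac12\sech^2\phi\,f\bigl(\e\beta+\tfrac12\axb\bigr)$, applies the same formula to $\fe$ itself, and reduces everything to a single quaternion product evaluated with Lemma~\ref{lem:calc}, ending with $(\fe)^{\tilde\e}=\tfrac14\sech^2\phi\sech^2\phi^\e f\bigl(|\xx|^2+4\cosh^2\phi\bigr)=f$ via $4\sinh^2\phi^\e=|\xx|^2\sech^2\phi$; your identity $\cosh2\phi^\e=1+\tfrac12|\xx|^2\sech^2\phi$ is the same input.

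There is, however, one concrete error that would derail the computation as you describe it: bar-conjugation does \emph{not} annihilate the $f$-contribution of $\fe_1-\fb^\e_1$. Since $f$ is real while the scalar $\e i$ is conjugated, \eqref{eq:fe1} gives $\fb^\e_1=-\e i f-\tfrac12\,\overline{\x}\,\sech^2\phi\bigl(\csch2\phi\,\fb_1+\coth2\phi\,f_1+\e i\,N\bigr)$, so that $\metric{\fe_1-\fb^\e_1}{f}=2\e i$ and the $f$-component of $\fe_1-\fb^\e_1$ is $2\e i f$, not zero (this also follows directly from \eqref{eq:someiptwo}). That term is essential: it contributes $-\e\tfrac{i}{2}\sech^2\phi^\e\cdot 2\e i\,f=\sech^2\phi^\e\,f$ to the first summand, and the remaining $\tanh^2\phi^\e\,f$ must come from $\tanh\phi^\e\,N^\e$, which is orthogonal to $\fe,\fe_1,\fb^\e_1$ but \emph{not} to $f$. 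If you drop the term, the $f$-coefficient comes out as $\tanh^2\phi^\e\neq1$ and the verification appears to fail. A second, smaller point: in your proposed shortcut, two unit vectors of the plane spanned by $\fe\beta^\e$ and $N^\e$ having the same $N^\e$-component may still differ by the sign of their $\fe\beta^\e$-components, so you must also match the inner product with $\fe\beta^\e$ (equivalently with $f_1$, using \eqref{eq:someiptwo} and its $\e$-analogue) before concluding $f=(\fe)^{\tilde\e}$.
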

\begin{proof}
Consider a non-conformal harmonic map~$f\colon S\to S^3$.  
Let~$\e$ and~$\tilde \e$ be such that~$\e  \tilde\e = -1$.
Note that 
\[
 \fe =  \frac{1}{2}\sech^2\phi f\Bigl(\e\beta+\frac{1}{2}\axb\Bigr).
\]
Therefore
\begin{align*}
 {(\fe)}^{\tilde \e}&= \frac{1}{2}\sech^2\phi^\e \fe\Bigl(\tilde\e\beta^\e+\frac{1}{2}\alpha^\e \times\beta^\e\Bigr)\\
                    &= \frac{1}{4}\sech^2\phi\sech^2\phi^\e 
                     f\Bigl(\e\beta+\frac{1}{2}\axb\Bigr)\Bigl(\tilde\e\beta^\e+\frac{1}{2}\alpha^\e \times\beta^\e\Bigr).
\end{align*}
Lemma~\ref{lem:calc} and a good amount of arithmetic then yield
\begin{equation}
{(\fe)}^{\tilde \e} = \frac{1}{4}\sech^2\phi\sech^2\phi^\e f\big(|\xx|^2+4\cosh^2\phi\bigr) = f.
\end{equation}
So the $(+)$transform and $(-)$transform are each others inverses.
\end{proof}

Theorem~\ref{thm:inverses} allows us to associate to a non-conformal harmonic map~$f\colon S \to S^3$
a sequence $\{f^p\mid p\in\Z \}$ of such harmonic maps by defining $f^0 = f$ and, 
for every integer~$p$, $f^{p+1}=(f^p)^+$ and $f^{p-1}=(f^p)^-$.
Moreover, if~$z$ is an adapted complex coordinate for one of the maps~$f^p$ in the sequence, then
it is an adapted complex coordinate for every map in the sequence. In the rest of the article 
we will refer to this sequence as the \emph{sequence associated to the map~$f$}. 

\section{Sequences of harmonic maps}
\label{sec:harmsequences}

In this section we answer the following question:
when is a non-conformal harmonic map~$f$ from a surface to the 3-sphere the same as its~$\e$-transform~$\fe$?
Two maps are considered to be the same if they are equal up to an isometry of the 3-sphere.
Note that in this case all maps in the sequence associated to~$f$ are congruent by Theorem~\ref{thm:inverses}.
Proposition~\ref{prop:bonnet} gives a criterium for this. It tells us that two non-conformal harmonic
maps are equal, up to an isometry of~$S^3$, if their respective functions~$\phi$ and~$\mu$ 
as defined in Section~\ref{sec:harmmaps} agree. Proposition~\ref{prop:bonnet} is an analogue of the classical
existence and uniqueness theorem of Bonnet.
Next we find two relations between the functions~$\phi^\e$ and~$\mu^\e$ of the $\e$-transform~$\fe$
and~$\phi$ and~$\mu$ of the original map~$f$. With the use of the Bonnet-type theorem and the 
relations between~$\phi, \mu$ and~$\phi^\e, \mu^\e$, we determine
all the non-conformal maps that are congruent to their~$\e$-transforms.

\begin{proposition}
\label{prop:bonnet}
Let~$\phi$ be a real positive smooth function and~$\mu$ a complex function on a simply connected surface~$S$
satisfying the differential equations~\eqref{eq:compcond}. 
Then there exists a non-conformal harmonic map~$f\colon S\to S^3$ such that all the
inner products of the frame~$\F=\{f,f_1,\fb_1,N\}$ are given by~\eqref{eq:A} and~$\metric{\d f_1}{N}=\mu$. 
Furthermore such a map~$f$ is unique up to isometries of~$S^3$:
if~$g$ is another map satisfying the above conditions, then~$g=R\circ f$ where~$R\in O(4)$.
\end{proposition}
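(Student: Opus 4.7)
The plan is to integrate the moving frame equations \eqref{eq:mfeqs}, together with their $\db$-conjugates, as an overdetermined first-order linear PDE system for $\F = \{f, f_1, \fb_1, N\}$ on the simply connected surface $S$. The coefficients of this system depend only on the prescribed data $\phi$ and $\mu$, and the compatibility conditions \eqref{eq:compcond} are exactly the integrability conditions $\d\db \F = \db\d \F$. By the Poincar\'e/Frobenius lemma on a simply connected domain, the system admits a unique solution once an initial frame is specified at a base point $p_0 \in S$.

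For existence, I would fix $p_0$, pick any $f(p_0) \in S^3 \subset \H$, and choose imaginary quaternions $\alpha_0, \beta_0$ with $|\alpha_0| = 2\sinh \phi(p_0)$, $|\beta_0| = 2\cosh \phi(p_0)$ and $\metric{\alpha_0}{\beta_0} = 0$. Setting $f_1(p_0) = \tfrac{1}{2} f(p_0)(\alpha_0 - i\beta_0)$ and $N(p_0) = \tfrac{1}{2}\csch 2\phi(p_0)\, f(p_0)(\alpha_0 \times \beta_0)$ produces an initial frame whose inner-product matrix is $A(p_0)$ from \eqref{eq:A}. Integrating the linear system yields a frame $\F$ on $S$. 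One then verifies that the Gram matrix of $\F$ stays equal to $A$ throughout: differentiating $\metric{\F_i}{\F_j}$ along $\d$ and $\db$ using \eqref{eq:mfeqs} and comparing with $\d A$ gives a homogeneous linear ODE for the discrepancy which vanishes initially, hence identically. Once this is known, harmonicity of $f$ is immediate from $\d\db f = \d \fb_1 = -\cosh 2\phi\, f = -|f_1|^2 f$, non-conformality from $\metric{f_1}{f_1} = -1$, and the defining condition $\metric{\d f_1}{N} = \mu$ from reading off the second moving frame equation.

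For uniqueness, given another solution $g$ with frame $\F' = \{g, g_1, \bar g_1, N'\}$, the initial frames $\F(p_0)$ and $\F'(p_0)$ both have Gram matrix $A(p_0)$. Splitting $f_1 = \tfrac12(f_x - i f_y)$ shows that the real vectors $f, f_x, f_y, N$ and $g, g_x, g_y, N'$ form orthogonal bases of $\R^4$ with the same Gram data, so there is an $R \in O(4)$ with $R\,\F(p_0) = \F'(p_0)$. Extending $R$ complex-linearly to $\H \otimes \C$ and applying it to $\F$, both $R \F$ and $\F'$ solve the same linear PDE system \eqref{eq:mfeqs} with the same initial value, so they agree on $S$ and in particular $g = R \circ f$.

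The main obstacle is ensuring that the reality structure is preserved under the flow, that is, that $f$ remains a genuine unit quaternion rather than an element of $\H \otimes \C$, and that $\fb_1$ is the $i$-conjugate of $f_1$ throughout. This needs to be checked by observing that the system obtained from \eqref{eq:mfeqs} together with its $\db$-conjugate respects complex conjugation, so that the real subspace $\H \subset \H \otimes \C$ and the reality locus $\{\fb_1 = \overline{f_1}\}$ are invariant under integration; the unit-quaternion condition is then encoded in the entry $\metric{f}{f} = 1$ of $A$ and is preserved as part of the Gram-matrix argument above.
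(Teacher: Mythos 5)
Your proposal is correct and follows essentially the same route as the paper: integrate the moving frame equations \eqref{eq:mfeqs} as a linear first-order system whose integrability is guaranteed by \eqref{eq:compcond}, show the Gram matrix of the frame is preserved because it satisfies a linear system with the correct initial value, recover $f$ and its harmonicity from $\d f = f_1$ and $\d\fb_1 = -\cosh 2\phi\, f$, and obtain uniqueness by aligning initial frames with an element of $O(4)$ and invoking uniqueness of solutions. Your explicit construction of the initial frame and your attention to the reality structure are somewhat more careful than the paper's exposition, but the underlying argument is the same.
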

\begin{proof}
(Existence.) Fix a point~$p$ in~$S$. 
We consider the moving frame equations~\eqref{eq:mfeqs} as a system of first order
differential equations with the $\R^4$-valued functions~$f$,~$f_1$, $\fb_1$ and~$N$ as variables.
Since the integrability conditions are assumed to hold, 
there exists a unique solution~$f$, $f_1$, $\fb_1$ and~$N$ on~$S$ such that
the ten inner products~$\metric{f}{f}$, $\metric{f}{f_1}$, \ldots, $\metric{N}{N}$ are given by the matrix~\eqref{eq:A}
at the point~$p$. 

The inner products satisfy the system of twenty linear differential equations
\begin{align*}
 \d \metric{f}{f} &= F_1(\metric{f}{f},\ldots,\metric{N}{N}, \phi,\mu),
        & \db \metric{f}{f} &= F'_1(\metric{f}{f},\ldots,\metric{N}{N}, \phi,\mu),\\
   &   \vdots & \vdots& \\
  \d \metric{N}{N} &= F_{10}(\metric{f}{f},\ldots,\metric{N}{N}, \phi,\mu),  
        & \db \metric{N}{N} &= F'_{10}(\metric{f}{f},\ldots,\metric{N}{N}, \phi,\mu).    
\end{align*} 
The functions~$F_1,\ldots, F'_{10}$ are linear with respect to the inner products and depend on~$\phi$ and~$\mu$. 
It is a straightforward task to check that the functions in the entries of the matrix~$A$ also satisfy this system of equations.
Since both solutions agree at the point~$p$, they must be the same at all points of~$S$.

Finally note that the two differential equations~$\d f= f_1$, $\db f = \fb_1$ are integrable,
so there exists a smooth map~$f$ from~$S$ to~$\R^4$. 
The inner products $\metric{f}{f_1}$ and~$\metric{f}{\fb_1}$ are zero, 
so~$f$ is a map into a 3-sphere.
The first and third moving frame equations imply that~$f$ is harmonic. The non-conformality is clear. 
The existence part is now proven.

(Uniqueness.)\; 
Assume that there are two non-conformal harmonic maps~$f$ and~$g$ from~$S$ into~$S^3$ such that~$\phi_f=\phi_g$
and~$\mu_f = \mu_g$.  After applying an isometry of the 3-sphere 
we may assume that the tangent vectors and normals of~$f$ and~$g$ agree at~$p$:
\begin{align*}
 f(p)&= g(p), &  f_1 (p) &=  g_1(p), & N_f (p) &= N_g(p).
\end{align*}
Moreover by the assumption the maps~$f$ and~$g$ satisfy the same moving frame equations~\eqref{eq:mfeqs}. 
By the uniqueness for solutions of ordinary differential equations $f$ and~$g$ are the same.
\end{proof}

This proposition will allow us to give an alternative proof of two uniqueness and existence theorems in~\cite{dhmv}.
We come back to this remark in Remark~\ref{rem:remarks}~(2).

\begin{lemma}
\label{lem:phimu}
Let~$f\colon S\to S^3$ be a non-conformal harmonic map and~$\fe$ its~$\e$-transform.
Then the functions~$\phi,\mu$ and~$\phi^\e,\mu^\e$ satisfy the following relations.
\begin{align}
 4\sinh^2\phi^\e &= |\xx|^2\sech^2\phi \label{eq:phimu1} \\ 
 4\sinh^2\phi &=|\mu^\e+2\e i \dph^\e|^2\sech^2 \phi^\e \label{eq:phimu1b} \\
 \tanh\phi^\e (\mu^\e + 2\e i \dph^\e )&= \tanh\phi \x  \label{eq:phimu2}
\end{align}
\end{lemma}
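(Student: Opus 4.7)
The plan is to establish the three identities in order, leveraging only Theorem~\ref{thm:inverses}, the formula~\eqref{eq:normfe1}, and the inner products~\eqref{eq:someiptwo}.

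For~\eqref{eq:phimu1}, I would first note that the derivation of $|f_1|^2 = \cosh 2\phi$ (from $f_1 = \tfrac{1}{2}f(\alpha - i\beta)$ together with~\eqref{eq:aabb}) carries over verbatim to~$\fe$, giving $|\fe_1|^2 = \cosh 2\phi^\e$. Combining this with~\eqref{eq:normfe1} and writing $\cosh 2\phi^\e = 1 + 2\sinh^2\phi^\e$ yields~\eqref{eq:phimu1} immediately. Equation~\eqref{eq:phimu1b} is then the same identity applied to the pair $(\fe,(\fe)^{-\e})$: since $(\fe)^{-\e} = f$ by Theorem~\ref{thm:inverses}, the relation~\eqref{eq:phimu1} for~$\fe$ with transform sign~$-\e$ reads $4\sinh^2\phi = |\mu^\e - 2(-\e) i\dph^\e|^2\sech^2\phi^\e$, which is exactly~\eqref{eq:phimu1b}.

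The substance lies in~\eqref{eq:phimu2}, which pins down the phase of $\mu^\e + 2\e i\dph^\e$ relative to $\xx$, not just its modulus. I would start from the inversion $f = (\fe)^{-\e}$, expanded via definition~\eqref{eq:def}, rewritten as
\[
 \tanh\phi^\e\, N^\e \;=\; f + \frac{\e i}{2}\sech^2\phi^\e(\fe_1 - \fb^\e_1),
\]
and take the complex bilinear inner product of both sides with~$f_1$. On the right, $\metric{f}{f_1}=0$, while~\eqref{eq:someiptwo} gives $\metric{\fe_1}{f_1} = -\tanh\phi\,\x$ and $\metric{\fb^\e_1}{f_1} = \overline{\metric{\fe_1}{\fb_1}} = 0$. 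On the left, the factor $\metric{N^\e}{f_1}$ is obtained by applying the fourth identity of~\eqref{eq:someiptwo} to~$\fe$ with transform sign~$-\e$ (so that, by Theorem~\ref{thm:inverses}, the result is~$f$): it reads $\metric{N^\e}{f_1} = -\tfrac{\e i}{2}\sech^2\phi^\e(\mu^\e + 2\e i\dph^\e)$. Cancelling the common factor $-\tfrac{\e i}{2}\sech^2\phi^\e$ on both sides produces~\eqref{eq:phimu2}.

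The main obstacle is conceptual rather than computational: one must swap the roles of~$f$ and~$\fe$ while simultaneously reversing the transform sign, carefully tracking the resulting sign changes in the combinations $\mu \pm 2\e i\dph$. Theorem~\ref{thm:inverses} legitimizes this symmetry, and after the substitution only a clean cancellation remains.
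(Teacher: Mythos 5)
Your proof is correct, and for the key identity~\eqref{eq:phimu2} it takes a genuinely different route from the paper. For~\eqref{eq:phimu1} and~\eqref{eq:phimu1b} you do essentially what the paper does: \eqref{eq:phimu1} is the comparison of $|\fe_1|^2=\cosh 2\phi^\e$ with~\eqref{eq:normfe1} (the paper extracts the same fact, phrased as $\metric{\ae}{\ae}=|\xx|^2\sech^2\phi$, in the proof of Lemma~\ref{lem:phie}), and \eqref{eq:phimu1b} is the same identity applied to the pair $(\fe,(\fe)^{-\e})$ via Theorem~\ref{thm:inverses}. For~\eqref{eq:phimu2}, however, the paper works entirely in the quaternionic $\{\alpha,\beta,\axb\}$ picture: it encodes Lemma~\ref{lem:calc} as a matrix $M^\e_0$, writes the inversion as $M^{\tilde\e}_\e M^\e_0=I$, reads off the $(1,1)$- and $(2,1)$-entries as a linear system in $\mu^\e_1+\e\phi^\e_y$ and $\mu^\e_2+\e\phi^\e_x$, and solves it (checking separately that the other seven entries are redundant). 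You instead take the single frame relation $\tanh\phi^\e\,N^\e=f+\tfrac{\e i}{2}\sech^2\phi^\e(\fe_1-\fb^\e_1)$ obtained from $(\fe)^{-\e}=f$ and pair it with $f_1$, using \eqref{eq:someiptwo} once for $(f,\fe)$ and once, with sign $-\e$, for $(\fe,f)$; the identity drops out after cancelling $-\tfrac{\e i}{2}\sech^2\phi^\e$. I checked the signs: $\metric{\fb^\e_1}{f_1}=\overline{\metric{\fe_1}{\fb_1}}=0$ and $\metric{f_1}{N^\e}=-\tfrac{\e i}{2}\sech^2\phi^\e(\mu^\e+2\e i\dph^\e)$ are both right, and the use of the $\e$-analogues of~\eqref{eq:someiptwo} on $U^\e$ is justified by Theorem~\ref{thm:harmonic} and Lemma~\ref{lem:phie}, which guarantee that $z$ is adapted for $\fe$ and that $N^\e$ and the frame $\F^\e$ are defined there. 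Your argument buys a much shorter computation that bypasses Lemma~\ref{lem:calc} entirely for this step; the paper's matrix computation is heavier but produces, as a by-product, the explicit linear system~\eqref{eq:lineq} and the verification that the full inversion identity is consistent, which is information your single inner product does not give.
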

\begin{proof}
 Equation~\eqref{eq:phimu1} was already found in the proof of Lemma~\ref{lem:phie}.
 By Theorem~\ref{thm:inverses} we know that~$(\phi^\e)^{\tilde\e}=\phi$ for~$\tilde \e = - \e$, so equation~\eqref{eq:phimu1b}
 follows from~\eqref{eq:phimu1}:
 \[
    4\sinh^2 \phi = 4\sinh^2(\phi^\e)^{\tilde\e} = |\mu^\e+2\e i \dph^\e|^2\sech\phi^\e.
 \]
 
 For equation~\eqref{eq:phimu2} we consider the matrices~$M$ and~$M^\e$ such that 
 \begin{align*}
 \begin{pmatrix}
 \alpha^\e \\ \beta^\e\\ \alpha^\e\times\beta^\e
 \end{pmatrix}
 &= M^\e_0
 \begin{pmatrix}
 \alpha\\ \beta\\ \alpha\times\beta
 \end{pmatrix},
 &
 \begin{pmatrix}
 (\alpha^\e)^{\tilde\e} \\ (\beta^\e)^{\tilde\e}\\ (\alpha^\e)^{\tilde\e}\times(\beta^\e)^{\tilde\e}
 \end{pmatrix}
 &=
 M^{\tilde\e}_\e
 \begin{pmatrix}
 \alpha^\e \\ \beta^\e\\ \alpha^\e\times\beta^\e
 \end{pmatrix}.
 \end{align*}
 The first equation is just the matrix notation of the three equations in~Lemma~\ref{lem:calc},
 so the elements of~$M^\e_0$ can readily be read from those equations. The expression for the matrix~$M^{\tilde\e}_\e$ is the same
 as the one for~$M^\e_0$, but one has to perform the changes~$\e\to-\e$, $\phi \to \phi^\e$ and~$\mu \to\mu^\e$. 
 By Theorem~\ref{thm:inverses} we know that~$M^{\tilde\e}_\e M^\e_0 =I$. The equations in the $(1,1)$- and the~$(2,1)$-entry of
 $M^{\tilde\e}_\e M^\e_0 =I$ give us the equations
 \begin{align}
 \label{eq:lineq}
 \begin{split}
 &(\mu_1-\e\phi_y)(\mu^\e_1+\e\phi^\e_y)+(\mu_2-\e\phi_x)(\mu_2^\e+\e\phi^\e_x)  =  \sinh 2\phi\sinh 2\phi^\e, \\
 &(\mu_2-\e\phi_x)(\mu^\e_1+\e\phi^\e_y)-(\mu_1-\e\phi_y)(\mu_2^\e+\e\phi^\e_x)  = 0.
 \end{split}
 \end{align}
 The diligent reader can check that the other seven equations in~$M^{\tilde\e}_\e M^\e_0 =I$ become trivial
 after substitution of \eqref{eq:phimu1}, \eqref{eq:phimu1b} and the equations~\eqref{eq:lineq}.
 The system of equations~\eqref{eq:lineq} is linear in~$\mu^\e_1+\e\phi^\e_y$ and~$\mu_2^\e+\e\phi^\e_x$ and its
 determinant is~$-|\xx|^2=-4\sinh^2\phi^\e\cosh^2\phi$, which by Lemma~\ref{lem:phie} is non-zero on the
 open dense subset~$U^\e$. Solving the system of equations gives
 \begin{align*}
 \label{eq:lineq}
 \begin{split}
 \mu^\e_1+\e\phi^\e_y  &=  \tanh\phi\coth\phi^\e (\mu_1-\e\phi_y), \\
 \mu^\e_2+\e\phi^\e_x  &=  \tanh\phi\coth\phi^\e (\mu_2-\e\phi_x),
 \end{split}
 \end{align*}
 which are the real and the imaginary part of equation~\eqref{eq:phimu2}.
\end{proof}

\begin{theorem}
\label{thm:equiv}
The following statements are equivalent.
\begin{enumerate}
\item[(a)] a non-conformal harmonic map~$f$ is congruent to its $\e$-transform~$\fe$;
\item[(b)] $\phi=\phi^\e$ and~$\mu=\mu^\e$;
\item[(c)] the function~$\phi$ is constant; and
\item[(d)] the function~$\mu$ is constant and non-zero.
\end{enumerate}
If one of these statements holds, all maps in the sequence associated to~$f$ are the same up to congruence.
\end{theorem}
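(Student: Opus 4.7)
The plan is to arrange the four conditions in a short cycle of implications $(c)\Leftrightarrow (d)$, $(c)\Leftrightarrow (b)$, $(b)\Leftrightarrow (a)$, invoking the compatibility conditions~\eqref{eq:compcond}, Lemma~\ref{lem:phimu} and the Bonnet-type Proposition~\ref{prop:bonnet} at the appropriate places. The concluding ``congruent sequence'' claim then follows by iterating (a) inductively to $f^p$ and $(f^p)^{\pm}$ for all $p\in\Z$.

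The equivalence $(c)\Leftrightarrow (d)$ I would read off directly from~\eqref{eq:compcond}. If $\phi$ is constant, the first equation gives $|\mu|^2=\sinh^2 2\phi$, a positive constant, while the second reduces to $\db\mu=0$; a holomorphic function of constant modulus is a non-zero constant. Conversely, if $\mu$ is a non-zero constant, then $\db\mu=0$ in the second equation forces $\bar\mu\dph\csch 2\phi=0$, so $\dph=0$ and $\phi$ is constant.

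For $(c)\Rightarrow (b)$ I would substitute $\dph=0$ into~\eqref{eq:phimu1} to obtain $4\sinh^2\phi^\e=|\mu|^2\sech^2\phi=4\sinh^2\phi$ (using $|\mu|^2=\sinh^2 2\phi$), whence $\phi^\e=\phi$; then~\eqref{eq:phimu2} collapses to $\mu^\e=\mu$. The reverse implication $(b)\Rightarrow (c)$ is equally direct: substituting $\phi^\e=\phi$ and $\mu^\e=\mu$ into~\eqref{eq:phimu2} yields $\tanh\phi\,(\mu+2\e i\dph)=\tanh\phi\,(\mu-2\e i\dph)$, so $\dph=0$ on the open set $U$ where $\phi>0$, and hence $\phi$ is constant there.

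The implication $(b)\Rightarrow (a)$ is immediate from Proposition~\ref{prop:bonnet}: by Theorem~\ref{thm:harmonic} the maps $f$ and $\fe$ share the adapted coordinate $z$, so if they also share $(\phi,\mu)$ they must be congruent. The converse $(a)\Rightarrow (b)$ is where the argument is most delicate, and I expect it to be the main technical obstacle: an ambient isometry $R\in O(4)$ with $\fe=R\circ f$ transports the frame $\F$ to that of $\fe$, which immediately gives $\phi^\e=\phi$, but the scalar $\mu^\e=\metric{\d f^\e_1}{N^\e}$ only equals $(\det R)\mu$, since the orientation convention used to fix $N$ may introduce a sign. In the case $\det R=-1$, substituting $\mu^\e=-\mu$ and $\phi^\e=\phi$ into~\eqref{eq:phimu2} leads to $\mu=2\e i\dph$, which in the proof of Lemma~\ref{lem:phie} was shown to force $\phi^\e=0$, contradicting $\phi>0$. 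Hence $\det R=1$, $\mu^\e=\mu$, and (b) holds.
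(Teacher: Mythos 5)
Your proof is correct and follows essentially the same route as the paper: the cycle $(b)\Leftrightarrow(c)\Leftrightarrow(d)$ via the compatibility conditions~\eqref{eq:compcond} and Lemma~\ref{lem:phimu}, and $(a)\Leftrightarrow(b)$ via Proposition~\ref{prop:bonnet}, with the final assertion obtained by propagating the constant pair $(\phi,\mu)$ along the sequence. Your treatment of $(a)\Rightarrow(b)$ is in fact slightly more careful than the paper's, which dismisses this direction as an immediate corollary of Proposition~\ref{prop:bonnet}; your observation that an orientation-reversing isometry would give $\mu^\e=-\mu$, and that this is excluded by~\eqref{eq:phimu2} together with the argument of Lemma~\ref{lem:phie}, closes a small gap the paper leaves implicit.
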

\begin{proof}
The first equivalence is an immediate corollary of Proposition~\ref{prop:bonnet}.

Now we prove the equivalence of~(b) and~(c). 
If~$\phi=\phi^\e$ and~$\mu=\mu^\e$, then it follows from equation~\eqref{eq:phimu2} that~$\dph$ is zero, so~$\phi$ is constant.
This proves one implication.
For the converse implication the equations \eqref{eq:phimu1} and~\eqref{eq:compcond} give
\begin{align*}
4\sinh^2\phi^\e = |\mu|^2\sech^2\phi = 4\sinh^2\phi,
\end{align*}
so~$\phi=\phi^\e$ and therefore $\mu=\mu^\e$ by~\eqref{eq:phimu2}. 

Next we prove the equivalence of~(c) and~(d).
If~$\phi$ is constant, the two compatibility conditions~\eqref{eq:compcond} give~$|\mu|^2=\sinh^2 2\phi$
and~$\db\mu=0$. So~$\mu$ is a holomorphic function with constant modulus. 
By the maximum modulus principle~$\mu$ is a constant function. Since~$\phi$ is positive,~$\mu$ is non-zero.
Conversely if~$\mu$ is a non-zero constant then~$\phi$
also is constant by the second compatibility condition in~\eqref{eq:compcond} and the positiveness of $\phi$.

The last assertion follows directly from the fact that the $(+)$ and~$(-)$transforms are each others inverse.
\end{proof}

In order to find explicit expressions for the maps that satisfy the conditions in the previous theorem
one can in principle integrate the moving frame equations~\eqref{eq:mfeqs}. This approach however is not
very practical. Therefore we will follow another approach. Note that the images of 
the harmonic maps with constant~$\phi$ and~$\mu$ are surfaces with constant principal curvatures. Furthermore 
these surfaces are flat, because the frame vectors~$f$, $f_x$, $f_y$ and~$N$ are orthogonal and have constant length.
Therefore the harmonic maps with constant~$\phi$ and~$\mu$ must be certain reparametrizations of
Clifford tori, because these tori are the only flat surfaces with constant principal curvatures in~$S^3$. 
This observation leads to the next classification. 

\begin{theorem}
 Consider the maps~$f\colon \R^2 \to S^3$ defined by
 \begin{equation}
 \label{eq:parametrization}
    f(x,y) = \bigl( r \cos (ax+by), \, r \sin (ax+by), \, s \cos (cx+dy), \, s \sin (cx+dy)\bigr),
 \end{equation}
 where~$r$ and~$s$ are non-zero real constants satisfying~$r^2 + s^2 = 1$ and
\begin{align}
\label{eq:abc}
 a =  \frac{2}{r}  \sinh\phi \cos\theta, \
 b = -\frac{2}{r}  \cosh\phi \sin\theta, \
 c =  \frac{2}{s}  \sinh\phi \sin\theta, \
 d =  \frac{2}{s}  \cosh\phi \cos\theta
\end{align}
 for some real constants $\phi > 0$ and $\theta$ related by
 \begin{align}
 \label{eq:theta}
  \theta &= \frac{1}{2} \arccos\bigl( (s^2 - r^2) \cosh 2\phi \bigr).
 \end{align}
 These maps are non-conformal harmonic maps with adapted coordinate $z=x+iy$
 and the constant $\phi$ plays the role of the function $\phi$ in the moving frame equations~\eqref{eq:mfeqs}, in particular~$\metric{f_x}{f_x}=4\sinh^2 \phi$
 and~$\metric{f_y}{f_y}=4\cosh^2\phi$ are constant. The function $\mu$ appearing in~\eqref{eq:mfeqs} is also constant, more precisely,
 \begin{equation}
  \label{eq:mu}
  \mu = \frac{\sinh 2\phi}{2rs}\left( (r^2-s^2) \sinh 2\phi - i \sqrt{1-(r^2-s^2)^2 \cosh^2 2\phi} \right).
 \end{equation}
 Moreover, these maps are congruent to their~$\e$-transforms and, conversely, all non-conformal harmonic maps congruent to their~$\e$-transform are of the form~\eqref{eq:parametrization}.
\end{theorem}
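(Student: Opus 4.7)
The plan is to establish the theorem in two halves. Forward direction: start from the explicit parametrization \eqref{eq:parametrization} and verify directly that it is a non-conformal harmonic map with the claimed $\phi$ and $\mu$; the congruence with the $\epsilon$-transforms then follows immediately from Theorem~\ref{thm:equiv}. Converse direction: given any non-conformal harmonic map congruent to its $\epsilon$-transform, Theorem~\ref{thm:equiv} reduces us to the case of constant $\phi$ and constant non-zero $\mu$, and a rigidity argument based on the classification of flat surfaces with constant principal curvatures in $S^3$ shows that such a map must have the form \eqref{eq:parametrization}.

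For the forward direction, differentiating \eqref{eq:parametrization} and substituting \eqref{eq:abc} gives
\[
\metric{f_x}{f_x} = a^2r^2 + c^2s^2 = 4\sinh^2\phi, \quad
\metric{f_y}{f_y} = b^2r^2 + d^2s^2 = 4\cosh^2\phi, \quad
\metric{f_x}{f_y} = abr^2 + cds^2 = 0,
\]
so $\metric{\d f}{\d f} = -1$ and $z = x+iy$ is an adapted complex coordinate realising the stated constant value of $\phi$. The Laplacian $\Delta f$ decomposes along the two invariant $2$-planes with eigenvalues $-(a^2+b^2)$ and $-(c^2+d^2)$, so the harmonic equation $\Delta f = -|df|^2 f$ is equivalent to $a^2+b^2 = c^2+d^2$; using $r^2+s^2=1$ this reduces precisely to \eqref{eq:theta}. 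The unit normal $N = \tfrac{1}{2}\csch 2\phi\, f(\axb)$ is then computed in the torus coordinates and $\mu = \metric{\d f_1}{N}$ is evaluated; after rewriting $|\sin 2\theta| = \sqrt{1-(r^2-s^2)^2\cosh^2 2\phi}$ via \eqref{eq:theta} one arrives at \eqref{eq:mu}. Since $\phi$ and $\mu$ are constant with $\mu\neq 0$ (the imaginary part of $\mu$ does not vanish as long as $(s^2-r^2)\cosh 2\phi\in(-1,1)$), Theorem~\ref{thm:equiv} yields that $f$ is congruent to its $\epsilon$-transforms.

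For the converse, let $f\colon S\to S^3$ be a non-conformal harmonic map congruent to its $\epsilon$-transform. Theorem~\ref{thm:equiv} gives that $\phi$ and $\mu$ are constant with $\phi>0$ and $\mu\neq 0$. The remark preceding that theorem identifies the image as a flat surface in $S^3$ with constant principal curvatures, hence (a piece of) a Clifford torus $\{(r\cos u, r\sin u, s\cos v, s\sin v):u,v\in\R\}$ with $r^2+s^2=1$, up to an isometry of $S^3$. Writing $f(x,y) = (r\cos u, r\sin u, s\cos v, s\sin v)$ and comparing the two $2$-plane components of $\Delta f = -|df|^2 f$ yields $\Delta u = \Delta v = 0$ together with $|\nabla u|^2 = |\nabla v|^2 = |df|^2$. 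Lifting to the universal cover, $2\d u$ is an entire holomorphic function of constant modulus, hence constant by the maximum principle, so $u$ is $\R$-affine; likewise for $v$. Absorbing additive constants into rotations of the two $2$-planes (an isometry of $S^3$) brings $f$ into the form \eqref{eq:parametrization}. The three inner-product conditions then force \eqref{eq:abc} for some angle $\theta$, and the harmonic condition pins down $\theta$ via \eqref{eq:theta}.

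The main obstacle is the explicit computation of $\mu$ for the parametrization \eqref{eq:parametrization}: assembling $f_1$, $\bar f_1$ and $N$ in the torus coordinates and simplifying down to the compact form \eqref{eq:mu} requires careful book-keeping. Working throughout in terms of $\theta$ and invoking \eqref{eq:theta} only at the final step keeps the calculation manageable; every other step is either elementary trigonometry or a direct appeal to Theorem~\ref{thm:equiv}.
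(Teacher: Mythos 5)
Your forward direction is essentially the paper's proof: the same inner-product computations, the same reduction of harmonicity to $a^2+b^2=c^2+d^2$ and hence to \eqref{eq:theta}, the same evaluation of $\mu=\metric{\d f_1}{N}$ via the explicit normal, and the same appeal to Theorem~\ref{thm:equiv} for the congruence (note that constancy of $\phi$ alone already suffices there, and $\mu\neq 0$ is automatic since $|\mu|=\sinh 2\phi>0$). Your converse, however, takes a genuinely different route. The paper argues via the uniqueness clause of Proposition~\ref{prop:bonnet}: since a map congruent to its $\e$-transform has constant $\phi>0$ and constant $\mu$ with $|\mu|=\sinh 2\phi$, it suffices to show that the family \eqref{eq:parametrization} realises every such pair $(\phi,\mu)$, which the paper does by writing $r=\cos\rho$, $s=\sin\rho$ and solving \eqref{eq:mu} for $\rho$. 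You instead invoke the classification of flat surfaces with constant principal curvatures in $S^3$ (which the paper uses only as motivation), place the image in a Clifford torus, and show that the angle functions $u,v$ are harmonic with $\d u,\d v$ holomorphic of constant modulus, hence affine. Both arguments are sound. The paper's version is self-contained and absorbs all normalisation ambiguities into the Bonnet-type uniqueness at one stroke; yours is more geometric and exhibits directly why the map must be linear in the torus angles, but it imports Cartan's isoparametric classification and leaves a few discrete choices (the sign of $\theta$ and the $\pm\pi/2$ ambiguity when deducing \eqref{eq:abc} from the three inner-product conditions) to be absorbed by isometries and coordinate reflections; it would be worth a sentence making that absorption explicit. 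The one place where your write-up defers rather than executes is the computation of \eqref{eq:mu}; the paper's shortcut there is to read off $\mu_1=\tfrac14\metric{f_{xx}-f_{yy}}{N}$ and $\mu_2=-\tfrac12\metric{f_{xy}}{N}$ directly from the explicit $N$, which you may want to adopt.
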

\begin{proof}
Consider a map~$f$ of the form~\eqref{eq:parametrization}. An easy calculation gives
\begin{align*}
 \metric{f_x}{f_x} &= a^2 r^2 + c^2 s^2, \\
 \metric{f_x}{f_y} &= a b r^2 + c d s^2, \\
 \metric{f_y}{f_y} &= b^2 r^2 + d^2 s^2.
\end{align*}
It follows from~\eqref{eq:abc} that~$\metric{f_x}{f_x}=4\sinh^2 \phi$, $\metric{f_x}{f_y}=0$ 
and~$\metric{f_y}{f_y}=4\cosh^2\phi$, so $f$ is non-conformal and the coordinate~$z=x+iy$ is adapted.
The map~$f$ is harmonic if and only if~$f_{xx}+f_{yy}=-(|f_x|^2+|f_y|^2) f$.
A calculation shows that this is equivalent to~$a^2 + b^2 = c^2 + d^2$, or, by \eqref{eq:abc}, to
\[
 \cos 2\theta + (r^2-s^2) \cosh 2\phi = 0.
\]
By the definition~\eqref{eq:theta} of~$\theta$, this condition is satisfied.
Now we only have to calculate~$\mu=\metric{\d f_1}{N}$. The normal~$N$ is
\[
  N = \bigl(s \cos(a x + b y), s \sin(a x + b y), -r \cos(c x + d y), -r \sin(c x + d y)\bigr).
\]
Hence, the real and imaginary parts of $\mu$ are given respectively by
\begin{align*}
 \mu_1 &= \frac{1}{4}\metric{f_{xx}-f_{yy}}{N} = \frac{r^2-s^2}{2 rs} \sinh^2 2\phi, \\
 \mu_2 &= -\frac{1}{2}\metric{f_{xy}}{N} = -\frac{\sqrt{1-(r^2-s^2)^2\cosh^2 2\phi}}{2 rs} \sinh 2\phi.
\end{align*}
Since~$\phi$ and~$\mu$ are constant,~$f$ is congruent to its~$\e$-transform by Theorem \ref{thm:equiv}.

The converse statement is now easy to prove. By the uniqueness clausule of Proposition~\ref{prop:bonnet}
it suffices to show that for the maps~\eqref{eq:parametrization} every positive real number $\phi$ and
every complex number~$\mu$ with modulus~$\sinh 2\phi$ can occur. 
For~$\phi$ this is trivial. We will now show that every complex number~$\mu_0=\sinh 2\phi\,e^{i \tau}$ can occur.
If we write~$r=\cos\rho$ and~$s=\sin\rho$, the expression~\eqref{eq:mu} for~$\mu$ becomes
\[
 \mu = \sinh 2\phi \, \left( \cot 2\rho \sinh 2\phi - i \csc 2\rho \sqrt{1-\cos^2 2\rho \cosh^2 2\phi} \right)
\]
Requiring this to be equal to~$\mu_0=\sinh 2\phi\,e^{i \tau}$ gives 
\[
   \rho = \frac{1}{2} \arccot\bigl(\cos\tau \csch 2\phi \bigl) + \frac{k\pi}{2},
\]
for some integer $k$ depending on~$\tau$. Since $(s^2-r^2) \cosh 2\phi = -\cos 2\rho \cosh 2\phi$ is contained in $[-1,1]$, 
we can use \eqref{eq:theta} to define $\theta$ and \eqref{eq:abc} to determine $a$, $b$, $c$ and~$d$. So by Proposition~\ref{prop:bonnet} we have classified all the non-conformal harmonic maps
that are congruent to their~$\e$-transforms.
\end{proof}

Theorem~\ref{thm:equiv}~(b) says that if~$\phi=\phi^\e$ and~$\mu=\mu^\e$, then a non-conformal harmonic map~$f$
is congruent to its $\e$-transform. In view of this result, it is interesting to ask what we can say 
about the map~$f$ if only $\phi=\phi^\e$ or only~$\mu=\mu^\e$ hold. 

\begin{lemma}
If~$\phi=\phi^\e$, then the following equations hold:
\begin{enumerate}
\item[(a)] $\mu^\e=\mu-4\e i\dph$;
\item[(b)] $|\mu|=|\mu^\e|$; 
\item[(c)] $|\mu|^2-4|\dph|^2=\sinh^2 2\phi$; and
\item[(d)] the function~$\coth\phi\,\dph$ is holomorphic.
\end{enumerate}
\end{lemma}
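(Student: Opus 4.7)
The plan is to combine the three identities of Lemma~\ref{lem:phimu} with the compatibility conditions~\eqref{eq:compcond} applied both to $f$ and to its $\e$-transform $\fe$. The guiding principle is that $\fe$ is itself a non-conformal harmonic map by Theorem~\ref{thm:harmonic}, so the pair $(\phi^\e, \mu^\e)$ satisfies its own copy of~\eqref{eq:compcond}; the hypothesis $\phi^\e = \phi$ then forces these two copies to interact.

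Part~(a) is essentially immediate: divide equation~\eqref{eq:phimu2} by $\tanh\phi = \tanh\phi^\e$ (nonzero where $\phi > 0$), use $\dph^\e = \dph$, and rearrange to obtain $\mu^\e = \mu - 4\e i\,\dph$. Part~(b) is the step where the second compatibility equation really enters: the first identity in~\eqref{eq:compcond} yields $|\mu|^2 = \sinh^2 2\phi + 2\sinh 2\phi\cdot\d\db\phi$, and the same relation applied to the transformed map, together with $\phi^\e = \phi$, gives the identical expression for $|\mu^\e|^2$. Hence $|\mu| = |\mu^\e|$.

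For part~(c), I would expand $|\mu^\e|^2 = |\mu - 4\e i\,\dph|^2 = |\mu|^2 + 16|\dph|^2 - 8\e\im(\mu\overline{\dph})$ using~(a), and compare with $|\mu|^2 = |\mu^\e|^2$ from~(b); this forces the identity $\e\im(\mu\overline{\dph}) = 2|\dph|^2$. Substituting this into the expansion of~\eqref{eq:phimu1} under $\phi^\e = \phi$, namely $\sinh^2 2\phi = |\mu|^2 + 4|\dph|^2 - 4\e\im(\mu\overline{\dph})$, delivers~(c).

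Finally, for~(d), a direct chain-rule computation using that $\phi$ is real (so $\db\phi = \overline{\dph}$) gives
\[
\db(\coth\phi\cdot\dph) = \coth\phi\,\d\db\phi - \csch^2\phi\,|\dph|^2.
\]
Since $\coth\phi/\csch^2\phi = \tfrac{1}{2}\sinh 2\phi$, vanishing of this expression is equivalent to $\sinh 2\phi\cdot\d\db\phi = 2|\dph|^2$, which follows by multiplying the first compatibility condition by $\sinh 2\phi$ and invoking~(c): $2\sinh 2\phi\,\d\db\phi = |\mu|^2 - \sinh^2 2\phi = 4|\dph|^2$. The only subtle point I anticipate is~(b), where one must remember to exploit the compatibility condition of the transformed map rather than try to derive $|\mu|=|\mu^\e|$ directly from~(a); everything else unpacks algebraically from the three equations of Lemma~\ref{lem:phimu}.
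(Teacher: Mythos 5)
Your proof is correct, and for parts (a), (b) and (d) it follows essentially the paper's own argument: (a) is read off from~\eqref{eq:phimu2} after cancelling $\tanh\phi=\tanh\phi^\e$, and (b) comes from subtracting the first compatibility condition in~\eqref{eq:compcond} for $f$ from its $\e$-analogue for $\fe$ (legitimate, since the paper records that $(\phi^\e,\mu^\e)$ satisfies its own copy of~\eqref{eq:compcond} on $U^\e$). Where you genuinely diverge is part~(c). The paper takes the difference of the \emph{second} compatibility conditions, $\db(\mu-\mu^\e)=-2(\bar{\mu}-\bar{\mu^\e})\dph\csch 2\phi$, substitutes (a) to obtain the intermediate identity $\d\db\phi=2|\dph|^2\csch 2\phi$, and then feeds this into the first compatibility condition to get (c); that same intermediate identity is then reused verbatim to prove (d). You never touch the $\db\mu$ equation: you extract $\e\im(\mu\overline{\dph})=2|\dph|^2$ purely algebraically from (a) and (b), and combine it with $\sinh^2 2\phi=|\xx|^2$, which is~\eqref{eq:phimu1} under $\phi=\phi^\e$, to land on (c). Both computations check out (I verified the cross terms in your expansions of $|\mu-4\e i\dph|^2$ and $|\xx|^2$). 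The trade-off surfaces in (d): lacking the paper's ready-made identity, you recover $\sinh 2\phi\,\d\db\phi=2|\dph|^2$ by combining (c) with the first compatibility condition, a correct and logically equivalent detour. Your route has the mild advantage of needing only Lemma~\ref{lem:phimu} and the $\phi$-equation of~\eqref{eq:compcond}, making no appeal to the evolution equation for $\mu$; the paper's route is a little shorter once its intermediate identity is in hand and makes the consistency of the two $\mu$-equations explicit.
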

\begin{proof}
Equation~(a) follows immediately from~$\phi=\phi^\e$ and~equation~\eqref{eq:phimu2}. The compatibility conditions~\eqref{eq:compcond}
for~$\phi$ and~$\phi^\e$ give
\[
  0 = 2\d\db (\phi-\phi^\e) = (|\mu|^2-|\mu^\e|^2)\csch2\phi.
\]
This gives~(b). The compatibility conditions~\eqref{eq:compcond} for~$\mu$ and~$\mu^\e$ give~
\[
 \db(\mu-\mu^\e) = -2(\bar{\mu}-\bar{\mu^\e})\dph \csch2\phi.
\]
After substitution of equation~(a) this becomes
\begin{equation} 
 \label{eq:ddbphi}
  \d\db \phi = 2|\dph|^2\csch 2\phi.
\end{equation}
Substituting the compatibility conditions~\eqref{eq:compcond} for~$\phi$ then gives equation~(c).
Deriving the function~$\coth\phi\,\dph$ and using equation~\eqref{eq:ddbphi} directly shows that the
function is holomorphic.
\end{proof}

\begin{lemma} 
\label{lem:samemu}
 Assume that~$\mu=\mu^\e$.
 \begin{enumerate}
  \item[(a)] If $\mu$ is zero, then~$\cosh\phi\cosh\phi^\e=c$ for some constant~$c$.
  \item[(b)] If $\mu$ is non-zero, then~$\phi=\phi^\e$ and~$f$ is congruent to~$\fe$.
 \end{enumerate}
\end{lemma}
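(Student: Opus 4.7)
For part~(a), the hypothesis $\mu=\mu^\e=0$ collapses relation~\eqref{eq:phimu2} of Lemma~\ref{lem:phimu} to $\tanh\phi\,\dph+\tanh\phi^\e\,\dph^\e=0$, which is precisely $\d\ln(\cosh\phi\cosh\phi^\e)=0$. Because $\ln(\cosh\phi\cosh\phi^\e)$ is real-valued, vanishing of its $\d$-derivative already forces it to be locally constant, so $\cosh\phi\cosh\phi^\e=c$ on each component.

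For part~(b), my plan proceeds in two stages: first control the ratio $\tanh\phi/\tanh\phi^\e$ up to a multiplicative constant, then pin that constant down to~$1$. Comparing the second compatibility condition in~\eqref{eq:compcond} for $f$ and for $\fe$ gives $\db\mu=-2\bar\mu\,\dph\csch 2\phi$ and $\db\mu^\e=-2\bar\mu^\e\,\dph^\e\csch 2\phi^\e$. Under $\mu=\mu^\e\neq 0$, division by $\bar\mu$ yields $\dph\csch 2\phi=\dph^\e\csch 2\phi^\e$, equivalently $\d(\ln\tanh\phi-\ln\tanh\phi^\e)=0$. Since the difference is real and $\d$-closed, it is a constant, so $\tanh\phi=k\tanh\phi^\e$ for some $k>0$.

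To force $k=1$ I would apply $\d\db$ to this same constant. Differentiating $\d\ln\tanh\phi=2\dph\csch 2\phi$ and feeding in the first compatibility condition $2\d\db\phi=-\sinh 2\phi+|\mu|^2\csch 2\phi$ (and its $\e$-analogue), then using the first-stage identity $|\dph^\e|^2\csch^2 2\phi^\e=|\dph|^2\csch^2 2\phi$ to eliminate $|\dph^\e|^2$, the identity $\d\db(\ln\tanh\phi-\ln\tanh\phi^\e)=0$ should reduce to
\[
  |\mu|^2\,\frac{\sinh^2 2\phi^\e-\sinh^2 2\phi}{\sinh^2 2\phi^\e}
  =4|\dph|^2\bigl(\cosh 2\phi-\cosh 2\phi^\e\bigr).
\]
Both $\sinh^2 2t$ and $\cosh 2t$ are strictly increasing for $t>0$, so the bracketed differences $\sinh^2 2\phi^\e-\sinh^2 2\phi$ and $\cosh 2\phi-\cosh 2\phi^\e$ carry strictly opposite signs at every point where $\phi\neq\phi^\e$. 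Since $|\mu|^2>0$, this is a sign contradiction, which forces $\phi=\phi^\e$ pointwise. With $\phi=\phi^\e$ and $\mu=\mu^\e$ in hand, the Bonnet-type theorem Proposition~\ref{prop:bonnet} delivers at once that $f$ and $\fe$ are congruent via an isometry of $S^3$.

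The main obstacle I anticipate is the $\d\db\ln\tanh\phi$ computation: several hyperbolic terms must collapse together, and the substitution from the first stage must be deployed at just the right moment to place the identity in a form whose signs are readable. Once that display is reached, the sign argument is almost immediate, and part~(a) is entirely straightforward from relation~\eqref{eq:phimu2} alone.
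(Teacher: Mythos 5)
Your proposal is correct and follows essentially the same route as the paper: part (a) by integrating relation~\eqref{eq:phimu2}, and part (b) by extracting $\dph\csch 2\phi=\d\phi^\e\csch 2\phi^\e$ from the $\db\mu$ compatibility condition, differentiating once more with the first compatibility condition, and concluding $\phi=\phi^\e$ by the same opposite-signs argument before invoking the Bonnet-type uniqueness. The displayed identity you anticipate is exactly the one the paper obtains (up to multiplication by $\sinh^2 2\phi$), so the step you flag as the main obstacle does go through.
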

\begin{proof}
If we assume that~$\mu$ is zero, then~\eqref{eq:phimu2} gives~$\dph\tanh\phi =-\d \phi^\e\tanh\phi^\e$,
which after integration becomes~$\ln(\cosh\phi\cosh\phi^\e)=\ln c$ where~$c$ is a positive constant.
We note that taking the $\db$-derivative of the first mentioned equation does not yield any new information.

Now assume that~$\mu$ does not vanish.
The second compatibility condition of~\eqref{eq:compcond} 
gives~$0=\db (\mu-\mu^\e)=-2\bar{\mu}(\dph \csch 2\phi -  \d \phi^\e \csch 2\phi^\e)$,
hence
\begin{equation}
\label{eq:csch}
  \dph \csch 2\phi =  \d \phi^\e \csch 2\phi^\e.
\end{equation}
Deriving this equation with respect to~$\db$ and using the first compatibility condition of~\eqref{eq:compcond}
yields
\[
    |\mu|^2 (\csch^2 2\phi - \csch^2 2\phi^\e) = 4(|\dph|^2\coth 2\phi \csch 2\phi - |\d\phi^\e|^2\coth2\phi^\e\csch2\phi^\e).
\]
After substitution of~\eqref{eq:csch} this equation becomes
\[
   |\mu|^2 (\csch^2 2\phi - \csch^2 2\phi^\e) = 4|\dph|^2\csch^2 2\phi(\cosh 2\phi-\cosh 2\phi^\e).
\]
Now note that the left and right hand side have opposite signs, so both sides have to vanish.
Therefore, since $\mu$ is non-zero, it follows that~$\phi=\phi^\e$. 
By Theorem~\ref{thm:equiv}~(b) the map~$f$ and its~$\e$-transform are congruent.
\end{proof}

\section{Almost complex surfaces in~$\nks$}
\label{sec:acsurfaces}

In this section we discuss the relation between harmonic maps~$f\colon S \to S^3$ and almost complex surfaces
in the nearly K\"ahler manifold~$\nks$. Before showing this relation, we first briefly recall some definitions and 
the necessary background on almost complex surfaces in~$\nks$. 
For more details the reader is referred to~\cite{bddv}.

An almost Hermitian manifold is a manifold endowed with an almost complex structure~$J$ and 
a Riemannian metric that is compatible with~$J$. If in addition the tensor field~$\nabla J$ is skew-symmetric, 
where~$\nabla$ is the Levi-Civita connection of the metric, then the manifold is called nearly K\"ahler.
The product manifold~$\nks$ of two 3-spheres admits such a nearly K\"ahler structure.
The almost complex structure~$J$ on~$\nks$ is defined by
 \[
   J(X,Y)_{(p,q)} = \frac{1}{\sqrt{3}}\left( 2pq^{-1}Y - X, -2qp^{-1}X + Y \right)
\]
for~$(X,Y)\in T_{(p,q)}\nks$.
It is easy to check that~$J$ is anti-involutive. 
If we denote the usual product metric on~$\nks$ by~$\metric{\,}{\,}$, then the nearly K\"ahler metric~$g$
is given by
\[
    g(Z,W) = \frac{1}{2}\bigl(\metric{Z}{W}+\metric{JZ}{JW}\bigr)
\]
and it is easily seen that~$g$ is compatible with~$J$. 
Almost complex surfaces in~$\nks$, also known as pseudo-holomorphic curves, are surfaces for which the almost complex structure~$J$
maps tangent vectors onto tangent vectors.
On~$\nks$ there is also an almost product structure~$P$ (i.e. an involutive endomorphism) defined by
\[
  P(X,Y)_{(p,q)} = (pq^{-1}Y,qp^{-1}X).
\]
If~$\psi\colon S\to \nks$ is an almost complex surface and~$z$ a complex coordinate, 
then~$g(P\psi_z,\psi_z)\,dz^2$ defines a holomorphic quadratic differential on the surface.

A~$H$-surface in the Euclidean 3-space is a surface~$X$ satisfying the equation
\begin{equation}
\label{eq:Hsurface}
    X_{xx}+X_{yy} = 2H X_x \times X_y
\end{equation}
where~$z=x+iy$ is a complex coordinate and~$H$ is a real function. 
In this article $H$ always is constant. 
It is important to note that the definition of a~$H$-surface does not depend on the choice of 
complex coordinates. To show this we follow~\cite{wente}. 
The solutions of~\eqref{eq:Hsurface} are the critical maps of the functional~$E_H(X)=D(X) + 4 H V(X)$. 
Here~$D(X)$ is the Dirichlet energy functional
\[
   D(X) = \int_S |dX|^2\, dx dy
\]
and~$V(X)$ is the volume integral
\[
   V(X) = \frac{1}{3}\int_S \metric{X}{X_x \times X_y}\, dx dy.
\]
The functional~$E_H$ is invariant under changes of complex coordinates, 
so we may change the complex coordinate on the~$H$-surface. 
Furthermore note that the quadratic differential~$\metric{X_z}{X_z}\,dz^2$ is holomorphic. 
If~$\metric{X_z}{X_z}\,dz^2$ vanishes, $X$ is a surface of constant mean curvature~$H$. 

We want to warn the reader that by a~$H$-surface many authors mean a constant mean curvature surface. 
We will not follow this terminology; instead we follow~Wente~\cite{wente}. 
So in this article a~$H$-surface is not necessarily a constant mean curvature surface and
the differential~$\metric{X_z}{X_z}\,dz^2$ must not be zero.

The results of~\cite{bddv} we need in this section will be summarised in the following theorem.
\begin{theorem}[\cite{bddv}]
\label{thm:cthm}
  Let~$\psi\colon S\to \nks$ be a simply connected almost complex surface and~$z=x+iy$ a complex coordinate.
 To such an almost complex surface one can associate a surface~$X$ in Euclidean 3-space satisfying the $H$-equation
 \begin{equation}
  \label{eq:wente}
      X_{xx}+X_{yy} = -\frac{4}{\sqrt{3}} X_x \times X_y,
 \end{equation}
 and vice versa. Two almost complex surfaces are congruent in~$\nks$ if and only if their corresponding
 surfaces in~$\R^3$ are congruent.
 
 Moreover, the holomorphic differential on the almost complex surface
 satisfies $g(P\psi_z,\psi_z) = e^{i\frac{\pi}{3}}\metric{X_z}{X_z}$. 
 Thus an almost complex surface with vanishing holomorphic differential corresponds to a surface in~$\R^3$
 with constant mean curvature~$-\frac{2}{\sqrt{3}}$.
\end{theorem}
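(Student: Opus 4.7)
Since the theorem is attributed to Bolton, Dillen, Dioos and Vrancken, the full verification appears in~\cite{bddv}; what follows is the approach I would take to reconstruct it.

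The plan is to build the correspondence explicitly via quaternions. First, I would write~$\psi=(p,q)$ with $p,q\colon S\to S^3$ and use the tangent identification~\eqref{eq:tang} to put $p_z=\tfrac12 p\,a$ and~$q_z=\tfrac12 q\,b$ with $a,b$ valued in~$\im\H\otimes\C$. The almost-complex condition~$J\psi_z=i\psi_z$, combined with the formula for~$J$ on~$T_{(p,q)}\nks$, becomes a linear relation between~$a$ and~$b$ involving the conjugation~$pq^{-1}$; this is the structural identity from which everything else flows.

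Next I would define the candidate~$X\colon S\to\R^3=\im\H$ by prescribing its derivatives in terms of~$a$ and~$b$ conjugated suitably into a single copy of~$\im\H$. The correct ansatz is the one that turns the compatibility condition~$\db X_z=\d X_{\bar z}$ into an automatic consequence of the Maurer--Cartan equations~$\db p_z=\d p_{\bar z}$ and~$\db q_z=\d q_{\bar z}$ once the almost-complex relation has been substituted. With $X$ so defined (the simple connectedness of~$S$ being used to integrate), I would verify~\eqref{eq:wente} by expanding $X_{xx}+X_{yy}$, splitting products in~$\im\H$ via~\eqref{eq:improd}, and tracking where the constant~$\tfrac{1}{\sqrt3}$ from the definition of~$J$ appears; the advertised factor~$-\tfrac{4}{\sqrt3}$ should then emerge after simplification. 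Conversely, given an~$H$-surface~$X$, one inverts the linear relation to recover $a$ and $b$ from~$X_z$ and integrates the two resulting $\im\H$-valued Pfaffian systems to obtain $p$ and $q$, the integrability conditions being precisely the $H$-equation for~$X$.

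For the congruence statement I would observe that the isometries of~$\nks$ preserving~$J$ are generated by simultaneous quaternionic left/right multiplications of~$(p,q)$, and these descend through the construction to rigid motions of~$\R^3$ via the double cover~$S^3\to\operatorname{SO}(3)$, with translations in~$\R^3$ absorbed by the constant of integration in~$X$. Finally, for the holomorphic differential, the identity $P\psi_z=(pq^{-1}q_z,qp^{-1}p_z)$ is immediate from the definition of~$P$, after which $g(P\psi_z,\psi_z)=\tfrac12(\metric{P\psi_z}{\psi_z}+\metric{JP\psi_z}{J\psi_z})$ reduces, via the almost-complex relation, to a complex scalar times~$\metric{X_z}{X_z}$, with the phase~$e^{i\pi/3}$ dropping out from the interaction of the~$\tfrac12$ in~$g$ with the~$\tfrac{1}{\sqrt3}$ in~$J$. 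The main obstacle throughout is guessing the right formula for~$X$ in terms of~$(p,q)$ at the outset: with the wrong ansatz neither~\eqref{eq:wente} nor the exact phase in the holomorphic differential will materialize, and it is exactly this careful choice that constitutes the technical heart of~\cite{bddv}.
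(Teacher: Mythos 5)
You should first note that the paper offers no proof of this statement: Theorem~\ref{thm:cthm} is expressly a \emph{summary} of results imported from~\cite{bddv}, and no proof follows it in the text. There is therefore no in-paper argument to measure your write-up against; the only benchmark is~\cite{bddv} itself, and against that benchmark what you have written is a plan rather than a proof. Every step that carries actual content is deferred: the explicit formula for~$X$ in terms of~$(p,q)$ is never written down, the verification that the constant~$-\tfrac{4}{\sqrt{3}}$ appears is replaced by ``should then emerge after simplification'', the congruence statement is only gestured at, and the phase~$e^{i\pi/3}$ is asserted to ``drop out''. You say yourself that choosing the right ansatz for~$X$ ``constitutes the technical heart'' of the argument --- but that heart is exactly what is missing, so nothing is actually established.

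Two concrete points on the outline itself. The general route --- write $\psi=(p,q)$, set $p_z=\tfrac12 pa$ and $q_z=\tfrac12 qb$ with $a,b\in\im\H\otimes\C$, impose $J\psi_z=i\psi_z$, and integrate an $\im\H$-valued Pfaffian system for~$X$ whose integrability condition is the $H$-equation --- is consistent with~\cite{bddv} and with how the present paper handles the easier Proposition~\ref{prop:lawson}. But your description of the key relation is inaccurate: substituting $p_z=\tfrac12 pa$ and $q_z=\tfrac12 qb$ into $J\psi_z=i\psi_z$ makes the factors $pq^{-1}$ and $qp^{-1}$ cancel, leaving the purely scalar relation $b=e^{i\pi/3}a$; it does not ``involve the conjugation $pq^{-1}$'' as you claim. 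That scalar $e^{i\pi/3}$ is precisely where the phase in $g(P\psi_z,\psi_z)=e^{i\frac{\pi}{3}}\metric{X_z}{X_z}$ comes from, and misstating it suggests the computation was not actually carried out. As a reading guide to~\cite{bddv} your paragraph is serviceable; as a proof it has a gap exactly where the theorem's content lies.
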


We can now easily prove a correspondence (Theorem~\ref{thm:acsurface}) between almost complex surfaces in~$\nks$
and harmonic maps in~$S^3$. 
The proof follows from Theorem~\ref{thm:cthm} and the following proposition.

\begin{proposition}
 \label{prop:lawson}
To a harmonic map from a simply connected surface into the 3-sphere~$S^3$ 
one can associate an $H$-surface~$X$ in Euclidean 3-space satisfying
\begin{equation}
 \label{eq:Hmineen}
  X_{xx}+X_{yy}=-2 X_x \times X_y ,
\end{equation} 
and vice versa.
Moreover the harmonic map is non-conformal if and only if the associated $H$-surface
has non-vanishing holomorphic differential~$\metric{X_z}{X_z}\,dz^2$ where~$z=x+iy$. 
\end{proposition}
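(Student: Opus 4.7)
The plan is to use the quaternionic description $f_x = f\alpha$, $f_y = f\beta$ with $\alpha,\beta$ taking values in $\im\H\cong\R^3$, and to show that the pair $(\alpha,\beta)$ encodes exactly an $H$-surface. First I would derive two identities. Since $\alpha^2 = -\aa$, one computes $f_{xx} = -\aa f + f\alpha_x$ and similarly $f_{yy} = -\bb f + f\beta_y$, so the harmonic map equation $\Delta f + |df|^2 f = 0$ is equivalent to the divergence-free equation
\[
   \alpha_x + \beta_y = 0.
\]
On the other hand, differentiating $f_x = f\alpha$ with respect to $y$, $f_y = f\beta$ with respect to $x$, subtracting and using~\eqref{eq:improd} in the form $\alpha\beta - \beta\alpha = 2\,\axb$, the compatibility $f_{xy} = f_{yx}$ becomes
\[
   \alpha_y - \beta_x = 2\,\axb.
\]

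For the forward direction I would set $X_x = \beta$, $X_y = -\alpha$. The divergence-free equation above is exactly the closedness of $\beta\,dx - \alpha\,dy$, so on simply connected $S$ such a map $X\colon S\to\R^3$ exists and is unique up to translation. Then $X_{xx} + X_{yy} = \beta_x - \alpha_y = -2\,\axb$ by the second identity, and $X_x\times X_y = \beta\times(-\alpha) = \axb$, so $X$ satisfies~\eqref{eq:Hmineen}. Conversely, given $X$ satisfying~\eqref{eq:Hmineen}, I set $\beta = X_x$, $\alpha = -X_y$ and consider the $\im\H$-valued $1$-form $\omega = \alpha\,dx + \beta\,dy$. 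The Maurer--Cartan condition $d\omega + \omega\wedge\omega = 0$ for the quaternionic system $df = f\omega$ reduces via~\eqref{eq:improd} to precisely $\alpha_y - \beta_x = 2\,\axb$, which is~\eqref{eq:Hmineen}; meanwhile $\alpha_x + \beta_y = 0$ holds automatically because $dX$ is exact. On simply connected $S$ this system then admits a solution $f\colon S\to\H$, unique up to left multiplication by a unit quaternion; since $\omega$ is purely imaginary one has $d|f|^2 = 0$, so $f$ lands in $S^3$ once we normalise $|f(p_0)|=1$ at a base point, and the two identities reverse to yield harmonicity.

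Finally, for the (non-)conformality matching, I would use $\d f = \tfrac12 f(\alpha - i\beta)$ and the identity $\metric{fu}{fv} = \metric{u}{v}$ for $|f|=1$, extended complex bilinearly, to get
\[
   \metric{\d f}{\d f} = \tfrac14\bigl(\aa - \bb - 2i\,\ab\bigr),
\]
while $X_z = \tfrac12(\beta + i\alpha)$ gives $\metric{X_z}{X_z} = \tfrac14\bigl(\bb - \aa + 2i\,\ab\bigr) = -\metric{\d f}{\d f}$. Hence $f$ is non-conformal precisely when $\metric{X_z}{X_z}$ is not identically zero. The only real obstacle in the whole argument is keeping track of signs and orientations in the quaternionic algebra; the structural content is simply that harmonicity together with frame compatibility carry the same information as the closedness of $dX$ together with the $H$-equation.
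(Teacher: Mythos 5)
Your proof is correct and follows essentially the same route as the paper: extract the structure equations $\alpha_x+\beta_y=0$ and $\alpha_y-\beta_x=2\,\axb$ from harmonicity and $f_{xy}=f_{yx}$, then integrate on the simply connected surface to obtain $X$ (you take $X_x=\beta$, $X_y=-\alpha$ rather than the paper's $X_x=-\beta$, $X_y=\alpha$, which only changes $X$ by an overall sign). Your closing identity $\metric{X_z}{X_z}=-\metric{\d f}{\d f}$ is a slightly cleaner way to obtain the non-conformality equivalence than the paper's appeal to an adapted coordinate, but the substance is the same.
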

\begin{proof}
Consider a harmonic map~$f\colon S \to S^3$ and take a complex coordinate~$z=x+iy$.
Then~$f_x = f\alpha$ and~$f_y = f\beta$. From the integrability condition~$f_{xy}=f_{yx}$ we obtain
\begin{equation}
 \label{eq:ab1}
   \alpha_y - \beta_x = 2\alpha\times\beta.
\end{equation}
The map~$f$ is harmonic, so~$f_{xx}+f_{yy}$ is parallel with~$f$. In terms of~$\alpha$ and~$\beta$
this equation gives
\begin{equation}
  \label{eq:ab2}
 \alpha_x + \beta_y =0.
\end{equation}
Since~$S$ is a simply connected surface, there exists a~$\R^3$-valued map~$X$,
unique up to a real constant vector in~$\R^3$, such that
\begin{align*}
X_x &= -\beta, & X_y &= \alpha.
\end{align*}
Indeed, equation~\eqref{eq:ab2} exactly is the integrability condition for this system of differential equations.
Equation~\eqref{eq:ab1} now becomes~\eqref{eq:Hmineen}.
If the harmonic map is non-conformal, we can assume that~$z$ is an adapted coordinate. 
Then clearly~$\metric{X_x}{X_x}-\metric{X_y}{X_y}=\bb-\aa=4$ is non-zero. 

In order to prove the other implication, one proceeds through the previous construction in the opposite direction.
\end{proof}

\begin{theorem}
\label{thm:acsurface}
To a harmonic map from a simply connected surface into the 3-
sphere~$S^3$ one can associate an almost complex surface in~$\nks$, and vice versa.
Moreover the harmonic map is non-conformal if and only if the associated almost
complex surface has a non-vanishing holomorphic differential.
\end{theorem}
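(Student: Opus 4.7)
The plan is to obtain Theorem~\ref{thm:acsurface} by composing the two correspondences already established: Proposition~\ref{prop:lawson} relates harmonic maps $f\colon S\to S^3$ to $H$-surfaces satisfying $X_{xx}+X_{yy}=-2X_x\times X_y$, and Theorem~\ref{thm:cthm} relates almost complex surfaces in $\nks$ to $H$-surfaces satisfying $X_{xx}+X_{yy}=-\tfrac{4}{\sqrt{3}}X_x\times X_y$. These are $H$-surfaces in the sense of~\eqref{eq:Hsurface} for two different values of $H$, namely $H=-1$ and $H=-\tfrac{2}{\sqrt{3}}$, so the bridge between the two is a homothety of $\R^3$.

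First I would observe that for any constant $\lambda>0$, if $X$ satisfies $X_{xx}+X_{yy}=2H\,X_x\times X_y$ then $\tilde X=\lambda X$ satisfies $\tilde X_{xx}+\tilde X_{yy}=\tfrac{2H}{\lambda}\,\tilde X_x\times\tilde X_y$. Taking $\lambda=\tfrac{\sqrt{3}}{2}$ converts an $H$-surface of the Lawson type~\eqref{eq:Hmineen} into one of the Wente type~\eqref{eq:wente}, and vice versa (with $\lambda=\tfrac{2}{\sqrt{3}}$). The homothety is a conformal change of $\R^3$ so it preserves the class of $H$-surfaces, only shifting the mean curvature value.

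Given a simply connected harmonic map $f\colon S\to S^3$, apply Proposition~\ref{prop:lawson} to produce an $H$-surface $X$ satisfying~\eqref{eq:Hmineen}, then rescale to $\tilde X=\tfrac{\sqrt{3}}{2}X$, which satisfies~\eqref{eq:wente}, and finally apply Theorem~\ref{thm:cthm} to obtain an almost complex surface $\psi\colon S\to\nks$. Reversing the construction (scale by $\tfrac{2}{\sqrt{3}}$ and apply the converse statements of both Proposition~\ref{prop:lawson} and Theorem~\ref{thm:cthm}) produces the opposite direction. Both passages are uniquely determined up to constant translations in $\R^3$, which disappear when passing to the derivatives $X_x,X_y$, so the correspondence is well defined on congruence classes.

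For the equivalence of non-conformality and non-vanishing of the holomorphic differential, note that under the homothety $\tilde X=\lambda X$ we have $\metric{\tilde X_z}{\tilde X_z}=\lambda^2\metric{X_z}{X_z}$, so the differential on $\tilde X$ vanishes if and only if the one on $X$ does. By the last sentence of Proposition~\ref{prop:lawson}, non-vanishing of $\metric{X_z}{X_z}\,dz^2$ is equivalent to non-conformality of $f$, and by the moreover-clause of Theorem~\ref{thm:cthm}, $g(P\psi_z,\psi_z)=e^{i\pi/3}\metric{\tilde X_z}{\tilde X_z}$ is non-vanishing exactly when $\metric{\tilde X_z}{\tilde X_z}$ is. Chaining these three equivalences gives the second statement of the theorem. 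The proof is essentially a bookkeeping exercise; the only thing to check with any care is the numerical factor in the rescaling, which I expect to be the main (if very minor) obstacle.
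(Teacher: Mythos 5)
Your proposal is correct and follows essentially the same route as the paper: compose Proposition~\ref{prop:lawson} with Theorem~\ref{thm:cthm} via a dilation of $\R^3$ converting $H=-1$ into $H=-\tfrac{2}{\sqrt{3}}$, and note that the vanishing of the quadratic differential is invariant under the dilation. Your explicit scaling factor $\lambda=\tfrac{\sqrt{3}}{2}$ is correct; the paper simply says ``after a suitable dilation'' without computing it.
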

\begin{proof}
By Theorem~\ref{thm:cthm} the harmonic map yields a~$H$-surface~$X$ satisfying~\eqref{eq:Hmineen} (here~$H=-1$). 
After a suitable dilation we can assume that the surface satisfies~\eqref{eq:wente} (here~$H=-\tfrac{2}{\sqrt{3}}$). 
Observe now that the quadratic differential of the~$H$-surface~$X$ vanishes 
if and only if the quadratic differential of the dilated surface vanishes.
We then only have to apply Proposition~\ref{prop:lawson} and the proof is done.
\end{proof}

\begin{remarks}
\label{rem:remarks}
 \begin{enumerate}
 \item 
 Proposition~\ref{prop:lawson} says that a harmonic map into the 3-sphere corresponds to 
 an $H$-surface in Euclidean 3-space and vice versa. 
 This is just the non-conformal analogue of the Lawson correspondence~\cite{lawson}: 
 each minimal surface in~$S^3$ has a constant mean curvature~``cousin surface'' in~$\R^3$.
 By Lawson's correspondence Theorem almost complex surfaces with vanishing holomorphic
 differential in~$\nks$ are in correspondence with surfaces of constant mean curvature~$-\tfrac{2}{\sqrt{4}}$ in~$\R^3$ and  
 with minimal surfaces in~$S^3$. 
 \item 
 In~\cite{dhmv} Li, Ma and the first and last author proved two Bonnet-type existence and uniqueness theorems for
 almost complex surfaces in~$\nks$. 
 Combining Proposition~\ref{prop:bonnet} and~Theorem~\ref{thm:acsurface} gives an alternative prove of 
 this theorems. 
 \end{enumerate}
\end{remarks}

\begin{corollary}
Let~$\psi \colon S\to \nks$ be a simply connected almost complex surface with non-vanishing holomorphic differential.
To this surface we can associate two almost complex surfaces that both have a non-vanishing holomorphic differential.
\end{corollary}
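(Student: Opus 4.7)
The plan is to chain together the correspondence of Theorem~\ref{thm:acsurface} with the two transforms constructed in Section~\ref{sec:transforms}. Concretely, starting from the given simply connected almost complex surface $\psi\colon S\to \nks$ with non-vanishing holomorphic differential, the first step is to invoke Theorem~\ref{thm:acsurface} to produce the corresponding harmonic map $f\colon S\to S^3$; since $\psi$ has non-vanishing holomorphic differential, the same theorem guarantees that $f$ is non-conformal.

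Next, I would apply Theorem~\ref{thm:harmonic} to $f$ to obtain the two transforms $f^+$ and $f^-$, both of which are non-conformal harmonic maps from $S$ into $S^3$. Since $S$ is simply connected and the transforms are defined on (a dense open subset of) $S$, no global monodromy issue appears; any technical subtlety about the zero set of $\alpha$ is already handled by Lemma~\ref{lem:phie}, which ensures that the constructions extend over an open dense subset.

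Finally, I would run the correspondence of Theorem~\ref{thm:acsurface} in the reverse direction applied to $f^+$ and $f^-$. This produces two almost complex surfaces $\psi^+,\psi^-\colon S\to \nks$; by the non-conformality clause of Theorem~\ref{thm:acsurface}, each of these almost complex surfaces has a non-vanishing holomorphic differential, which is exactly what the corollary claims.

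Since each of the three steps is either a direct application of an already established theorem or a direct invocation of a lemma from earlier in the paper, there is no serious obstacle; the argument is essentially a bookkeeping exercise. The only point requiring a brief comment is the preservation of simple connectedness and of the non-vanishing condition under the transforms, and both of these are immediate from the cited results (in particular the last statement of Theorem~\ref{thm:harmonic} together with Theorem~\ref{thm:acsurface}).
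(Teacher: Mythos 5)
Your proposal is correct and follows exactly the paper's own argument: pass from $\psi$ to the non-conformal harmonic map $f$ via Theorem~\ref{thm:acsurface}, apply Theorem~\ref{thm:harmonic} to obtain the non-conformal harmonic maps $f^+$ and $f^-$, and run the correspondence back to get two almost complex surfaces with non-vanishing holomorphic differential. Your version merely spells out a few routine points (simple connectedness, the role of Lemma~\ref{lem:phie}) that the paper leaves implicit.
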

\begin{proof}
This is a simple consequence of Theorems~\ref{thm:harmonic} and~\ref{thm:acsurface}.
Let~$f$ be the non-conformal harmonic map corresponding to~$\psi$ via Theorem~\ref{thm:acsurface}.
Then~the harmonic maps~$f^+$ and~$f^-$ obviously also correspond to two almost complex surfaces.
\end{proof}

The remark after Theorem~\ref{thm:inverses} tells us one can associate to an almost complex surface in~$\nks$
with non-vanishing holomorphic differential a sequence~$\{\psi^p \mid p \in \Z\}$ of such surfaces. 
One just has to define $\psi^p$ as the almost complex surface in~$\nks$ associated to the harmonic map~$f^p$
for each~$p\in \Z$.
In a similar fashion a $H$-surface in Euclidean 3-space induces a sequence~$\{X^p\mid p\in \Z \}$ of $H$-surfaces. 
The derivatives of~$X^\e$ are~$X^\e_x = -\beta^\e$ and~$X^\e_y = \alpha^\e$. 
The expressions for~$\alpha^\e$ and~$\beta^\e$ given in Lemma~\ref{lem:calc} are complicated so it is not practical to integrate
this system to obtain the associated~$H$-surfaces~$X^\e$.
An explicit expression for~$X^\e$ in terms of the original~$H$-surface~$X$ is given in the next lemma.

\begin{lemma}
 Consider an~$H$-surface~$X$ in the Euclidean 3-space with~$H=-1$.
 Let~$z=x+iy$ be an adapted coordinate and~$\phi$ the function such that~$\metric{X_x}{X_x}=4\cosh\phi$
 and~$\metric{X_y}{X_y}=4\sinh\phi$.
 The $\e$-transform~$X^\e$ of~$X$ is given by
 \begin{equation}
 \label{eq:Xe}
    X^\e = X -\frac{1}{2}\sech^2\phi \Bigl(\e X_x - \frac{1}{2}X_x\times X_y\Bigr)
 \end{equation}
  and satisfies equation~\eqref{eq:Hsurface}.
\end{lemma}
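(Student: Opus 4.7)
The plan is to invoke Proposition~\ref{prop:lawson}. By that correspondence, the $H$-surface associated to the non-conformal harmonic map $f^\e$ is characterised, up to an additive constant, by the two first-order conditions $X^\e_x = -\beta^\e$ and $X^\e_y = \alpha^\e$; once these hold, the fact that $X^\e$ satisfies the $H$-equation~\eqref{eq:Hsurface} (with $H=-1$) follows automatically from the construction in Proposition~\ref{prop:lawson} applied to $f^\e$. So the content of the lemma reduces to verifying these two identities for the map defined by the right-hand side of~\eqref{eq:Xe}.

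Substituting $X_x = -\beta$, $X_y = \alpha$ and $X_x\times X_y = \alpha\times\beta$, formula~\eqref{eq:Xe} becomes
\[
X^\e = X + \tfrac{1}{2}\sech^2\phi\bigl(\e\beta + \tfrac{1}{2}\alpha\times\beta\bigr).
\]
I would differentiate this with respect to $x$ and $y$ and expand via the product rule. The derivative of $\sech^2\phi$ contributes $-2\sech^2\phi\tanh\phi\cdot\phi_x$ (respectively $\phi_y$), which I would convert to $\mu_1,\mu_2$ where useful via the compatibility conditions~\eqref{eq:compcond}. The derivatives of $\alpha$, $\beta$ and $\alpha\times\beta$ must be re-expressed in the basis $\{\alpha,\beta,\alpha\times\beta\}$ using~\eqref{eq:aabb}, $\metric{\alpha}{\beta}=0$, the integrability relations $\alpha_x + \beta_y = 0$ and $\alpha_y - \beta_x = 2\alpha\times\beta$ (already derived in the proof of Proposition~\ref{prop:lawson}), and the moving-frame equations~\eqref{eq:mfeqs} applied to $f_1 = \tfrac{1}{2}f(\alpha - i\beta)$ — which, together with $\mu = \metric{\d f_1}{N}$, pin down the remaining coefficients of $\alpha_z$ and $\beta_z$ in that basis. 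The resulting expressions for $X^\e_x$ and $X^\e_y$ are then matched coefficient by coefficient with the formulae for $-\beta^\e$ and $\alpha^\e$ given in Lemma~\ref{lem:calc}.

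The main obstacle is the algebraic bookkeeping: six scalar coefficients must cancel correctly, and sign errors are easy to make in the hyperbolic identities involved (for instance $\tanh\phi\,\csch 2\phi = \tfrac{1}{2}\sech^2\phi$). A cleaner organisation is available: since $z$ is adapted we have $\metric{\alpha}{\beta}=0$, hence $\alpha\beta = \alpha\times\beta$ as quaternions, so~\eqref{eq:Xe} can be rewritten as $X^\e - X = \tfrac{1}{2}\sech^2\phi\bigl(\e\beta + \tfrac{1}{2}\alpha\beta\bigr)$, and by the formula $f^\e = \tfrac{1}{2}\sech^2\phi\,f\bigl(\e\beta + \tfrac{1}{2}\alpha\beta\bigr)$ noted in the proof of Theorem~\ref{thm:inverses} this right-hand side equals $f^{*}f^\e$. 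Differentiating the quaternionic identity $X^\e - X = f^{*}f^\e$ in $x$, using $\d_x f^\e = f^\e\alpha^\e$ together with $\d_x f^{*} = (f\alpha)^{*} = -\alpha f^{*}$, reduces the desired relation $X^\e_x = -\beta^\e$ to the single identity
\[
-\beta^\e + \beta = -\alpha(X^\e - X) + (X^\e - X)\alpha^\e,
\]
whose verification using Lemma~\ref{lem:calc} is considerably shorter than the real-variable approach; an analogous calculation in $y$ yields $X^\e_y = \alpha^\e$. Invoking Proposition~\ref{prop:lawson} on $f^\e$ then concludes that $X^\e$ satisfies~\eqref{eq:Hsurface}.
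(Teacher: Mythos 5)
Your proposal is correct and follows essentially the same route as the paper: both reduce the lemma, via the harmonic-map/$H$-surface correspondence (you cite Proposition~\ref{prop:lawson}, the paper cites Theorem~\ref{thm:acsurface}, whose relevant content is that proposition), to checking that the right-hand side of~\eqref{eq:Xe} has $x$- and $y$-derivatives $-\beta^\e$ and $\alpha^\e$, and both carry out that check by rewriting the moving-frame equations~\eqref{eq:mfeqs} in terms of $\alpha$, $\beta$, $\alpha\times\beta$ and comparing with Lemma~\ref{lem:calc}. Your quaternionic repackaging of the verification via $X^\e-X=f^*\fe$ is a tidy streamlining of the same computation rather than a genuinely different argument.
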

\begin{proof}
Recall that the definition of a~$H$-surface is independent of the choice of complex coordinate,
so we may assume that~$z$ is an adapted complex coordinate for~$X$.
By Theorem~\ref{thm:acsurface} it is sufficient to show that the derivatives of the right hand side
of~\eqref{eq:Xe} are equal to~$-\beta^\e$ and~$\alpha^\e$ respectively.
The moving frame equations~\eqref{eq:mfeqs} in terms of~$\alpha$ and~$\beta$ are
\begin{align*}
\alpha_x &= \phantom{-}\phi_x \coth\phi\,\alpha -\phi_y\tanh\phi\,\beta + \mu_1\csch2\phi\,\alpha\times\beta,\\
\alpha_y &=\phantom{-}\phi_y \coth\phi\,\alpha +\phi_x\tanh\phi\,\beta + (1-\mu_2\csch2\phi)\,\alpha\times\beta,\\
\beta_x &= \phantom{-}\phi_y \coth\phi\,\alpha  +\phi_x\tanh\phi\,\beta - (1+\mu_2\csch2\phi)\,\alpha\times\beta,\\
\beta_y &= -\phi_x \coth\phi\,\alpha +\phi_y\tanh\phi\,\beta - \mu_1\csch2\phi\,\alpha\times\beta.
\end{align*}
A direct calculation using these equations shows that the derivatives of the expression in~\eqref{eq:Xe}
are indeed~$-\beta^\e$ and~$\alpha^\e$. 
\end{proof}

\nocite{*}
\bibliographystyle{amsplain}
\bibliography{harmonicmapsbib}

\end{document}